\newtheorem{thm}{Theorem}[section]
\newtheorem{lem}[thm]{Lemma}
\newtheorem{prop}[thm]{Proposition}
\theoremstyle{remark}
\newtheorem{rem}[thm]{Remark}
\theoremstyle{definition}
\def\A{\mathcal{G}^*}
\def\G{\mathcal{G}}
\def\la{\langle}
\def\ra{\rangle}
\def\Z{{\mathbb Z}}
\DeclareMathOperator{\Aut}{Aut}
\DeclareMathOperator{\Cay}{\Gamma}
\DeclareMathOperator{\Dic}{Dic}
\DeclareMathOperator{\GF}{GF}
\DeclareMathOperator{\SL}{SL}
\DeclareMathOperator{\SU}{SU}
\DeclareMathOperator{\fix}{Fix}
\begin{document}
\title[Finite groups and integral cubic Cayley graphs]
{\rm On finite groups with prescribed two-generator subgroups and integral Cayley graphs}

\author[Y.-Q.~Feng]{Yan-Quan~Feng}
\address{Y.-Q.~Feng, Department of Mathematics, Beijing Jiaotong University, Beijing, \newline\indent 100044,  P.~R.~China}
\email{yqfeng@bjtu.edu.cn}

\author[I.\ Kov\'acs]{Istv\'an Kov\'acs}
\address{I.\ Kov\'acs,
UP IAM and UP FAMNIT, University of Primorska, Muzejski trg 2
\newline\indent SI-6000 Koper, Slovenia}
\email{istvan.kovacs@upr.si}

\thanks{Y.-Q.~Feng was supported by the National Natural Science Foundation of China (11731002) and the 111 Project of China (B16002). I.\ Kov\'acs was supported by the
Slovenian Research Agency (research program P1-0285 and research projects N1-0140, N1-0062, J1-9108, J1-1695 and J1-2451) and is grateful to  Beijing Jiaotong University for hospitality.}

\begin{abstract}
In this paper, we characterize the finite groups $G$ of even order with
the property that for any involution $x$ and element $y$ of $G$,
$\la x, y \ra$ is isomorphic to one of the following groups:
$\Z_2,$  $\Z_2^2$, $\Z_4$, $\Z_6$, $\Z_2 \times \Z_4$,
$\Z_2 \times\Z_6$ and $A_4$.  As a result, a characterization
will be obtained for the finite groups all of whose
Cayley graphs of degree $3$ have integral spectrum.
\end{abstract}
\keywords{finite $2$-group, Frobenius group, special group}
\subjclass[2010]{20D45, 20E25}
\maketitle
\section{Introduction}\label{sec:intro}

The class of finite groups studied in this paper was defined in a
graph theoretical context.
Let $G$ be a group with identity element $1$, and let $X \subset G$ be a subset such that $1 \notin X$ and $x^{-1} \in X$ for every $x \in X$. The {\em Cayley graph}
$\Cay(G,X)$ has vertex set $G$ and edges
$\{g,xg\}$, $g \in G$ and $x \in X$.
A graph is said to be {\em integral} if all of its eigenvalues are integers.
The study of integral graphs goes back to \cite{HS} and
the study of integral Cayley graphs was initiated in \cite{AV}.
A group $G$ is said to be {\em Cayley integral} if all Cayley graphs
$\Cay(G,X)$ are integral (see \cite{KS}). The Cayley integral groups were classified independently in \cite{AJ,ABM} and these are the following groups:
$$
D_6,~\Dic(\Z_{6}),~\Z_2^m \times \Z_3^n,~\Z_2^m \times \Z_4^n,
~\Z_2^m \times Q_8~(m,n \geqslant 0).
$$
By $\Z_n$ and $D_n$ we denote the cyclic and dihedral group
of order $n$, respectively.
Given an abelian group $A$ having a unique involution $t$ and order $|A| > 2$, the {\em generalized dicyclic group} $\Dic(A)$ is the group $\la A, x\ra$, where $x^2=t$ and $a^x=a^{-1}$ for all $a \in A$
(see \cite[page~252]{S}). The group $\Dic(\Z_4)$ is also known as the
{\em quaternion group}, denoted by $Q_8$.

Cayley integral groups were generalized in \cite{EK,MW}.
For a positive integer $k$, let $\G_k$ denote the
class of finite groups all of whose Cayley graphs of valency at most $k$ are integral (see \cite{EK}). All classes $\G_k$ with $k \neq 3$ were determined in \cite{EK}.
\smallskip

\begin{itemize}
\setlength{\itemsep}{0.6\baselineskip}
\item $\G_1$: all finite groups.
\item $\G_2$: all $D_8$- and $D_{12}$-free groups $G$ with the property that the order $$
o(x) \in \{1,2,3,4,6\}~\text{for every}~x \in G.
$$
\item $\G_4=\G_5$: the Cayley integral groups and the groups
$\Dic(\Z_3^n \times \Z_6),\, n \geqslant 1$.
\item $\G_k,\, k \geqslant 6$: the Cayley integral groups.
\end{itemize}
\smallskip

For $k \geqslant 2$, let $\A_k$ be the class of groups that admit at least one Cayley
graph of valency $k$ and
all their $k$-valent Cayley graphs are integral (see \cite{MW}).
It was shown that $\G_3$ is the union of $\A_3$ and the class of all
$3$-groups of exponent $3$ (see \cite[Corollary~2.7]{MW}).
In particular, all non-nilpotent groups in $\G_3$ are also in $\A_3$.
Ma and Wang~\cite{MW} gave the following purely group
theoretical characterization of the class $\A_3$.

\begin{thm}\label{MW}
{\rm (\cite[Theorem~2.6]{MW}).}
A finite group $G \in \A_3$ if and only if
\begin{enumerate}[{\rm (i)}]
\item $G \cong D_6$ or
\item $G$ has even order and for all $x, y \in G$ such that $o(x)=2$,
we have that $\la x, y \ra$ is isomorphic to one of the following
groups:
$$
\Z_2,~\Z_2^2,~\Z_4,~\Z_6,~\Z_2 \times \Z_4,~
\Z_2 \times\Z_6~\text{and}~A_4.
$$
\end{enumerate}
\end{thm}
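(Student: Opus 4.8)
The plan is to convert the spectral property defining $\A_3$ into a statement about the subgroups of $G$ generated by symmetric $3$-element sets, and then to read off cases (i)–(ii). The basic reduction driving everything is that for a symmetric $X$ with $1\notin X$ the graph $\Cay(G,X)$ is a disjoint union of $[G:\la X\ra]$ copies of the connected graph $\Cay(\la X\ra,X)$, so $\Cay(G,X)$ is integral iff $\Cay(\la X\ra,X)$ is. Since a symmetric $3$-set is either three involutions or an involution together with an inverse pair $\{y,y^{-1}\}$ with $o(y)>2$, this gives: $G\in\A_3$ iff $G$ admits such a set (which holds for every even $G\neq\Z_2$) and, for every symmetric $3$-set $X\subseteq G$, the connected graph $\Cay(\la X\ra,X)$ is integral. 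The whole problem thus lives inside the $2$- and $3$-generated subgroups $\la X\ra$.

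The first step is the local spectral analysis. For an involution $x$ and element $y$ with $o(y)>2$ we have $\la X\ra=\la x,y\ra$ with $X=\{x,y,y^{-1}\}$. For commuting $x,y$ one meets the prism $C_n\,\square\,K_2$ (eigenvalues $2\cos(2\pi k/n)\pm1$) or, when $x=y^{n/2}$, the M\"obius ladder (eigenvalues $2\cos(2\pi k/n)+(-1)^k$); both are integral exactly when $n\in\{1,2,3,4,6\}$. For non-commuting involutions $\la x,y\ra=D_{2n}$, and $\Cay(D_{2n},\{r,r^{-1},s\})$ has $2\cos(2\pi j/n)\pm1$ among its eigenvalues, forcing $n\in\{3,4,6\}$; but the three-involution graphs $\Cay(D_8,\{s,rs,r^2\})$ and $\Cay(D_{12},\{s,rs,r^3\})$ have the irrational eigenvalues $-1\pm\sqrt2$ and $-1\pm\sqrt3$, eliminating $D_8$ and $D_{12}$. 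Invoking the known classification of connected cubic integral graphs (there are only thirteen, on at most $30$ vertices) bounds $|\la X\ra|$ and turns the determination of the admissible $\la x,y\ra$ into a finite computation, whose outcome is
$$\Z_2,\ \Z_2^2,\ \Z_4,\ \Z_6,\ \Z_2\times\Z_4,\ \Z_2\times\Z_6,\ A_4\ \text{and}\ D_6;$$
in particular every element of a group in $\A_3$ has order in $\{1,2,3,4,6\}$.

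The backward implication is then a finite check through the reduction: for three pairwise commuting involutions $\la X\ra$ is elementary abelian and $\Cay(\la X\ra,X)\in\{K_4,Q_3\}$, and for the remaining sets one meets only the integral graphs $K_4$, $K_{3,3}$, the triangular and hexagonal prisms, the cube $Q_3$ and the truncated tetrahedron already computed, so $D_6$ and every group satisfying (ii) lies in $\A_3$. For the forward implication, let $G\in\A_3$; then $G$ has even order since a cubic graph has $3|G|/2$ edges. If the involutions of $G$ pairwise commute, then no $\la x,y\ra$ can be $D_6$ (which contains non-commuting involutions), so by the local list each $\la x,y\ra$ with $o(x)=2$, $o(y)>2$ is one of $\Z_4,\Z_6,\Z_2\times\Z_4,\Z_2\times\Z_6,A_4$, which is precisely (ii).

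The remaining case — some two involutions $a,b$ do not commute — is the main obstacle. The local analysis only yields $\la a,b\ra\cong D_6$, and this must be strengthened to $G\cong D_6$. Two-generated data do not suffice: the generalized dihedral group $\mathrm{Dih}(\Z_3^2)=\Z_3^2\rtimes\Z_2$ has every $\la x,y\ra$ isomorphic to $\Z_2$ or $D_6$, yet it is not in $\A_3$, since the three-involution graph $\Cay(\mathrm{Dih}(\Z_3^2),\{(0,-1),(e_1,-1),(e_2,-1)\})$ has eigenvalues $\pm\sqrt3$. The plan is therefore to exploit three-involution connection sets: assuming $G\neq D_6$ I would take $g\notin\la a,b\ra$ and exhibit three involutions generating a subgroup with a non-integral cubic Cayley graph, the mechanism being that once the subgroup generated by the involutions of $G$ is larger than a single $S_3$ it contains a forbidden $D_8$, $D_{12}$ or $\mathrm{Dih}(\Z_3^2)$. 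Controlling this configuration — essentially a Fischer-type analysis of the involutions constrained by exponent dividing $12$ and by $\{D_8,D_{12}\}$-freeness — is the heart of the argument and forces $G\cong D_6$, giving (i).
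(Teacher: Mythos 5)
Your reduction to connected graphs on $\la X\ra$, the classification of symmetric $3$-sets, the local spectral computations (the prism/M\"obius ladder eigenvalues, the $-1\pm\sqrt2$ and $-1\pm\sqrt3$ spectra killing $D_8$ and $D_{12}$, the $\pm\sqrt3$ eigenvalues of the three-involution graph on $\mathrm{Dih}(\Z_3^2)$), and the sufficiency check are all correct, and they match the standard route to this result; note that the paper itself offers no proof of this statement, quoting it from Ma and Wang \cite{MW}, so the only question is whether your argument is complete. It is not: the forward implication in the presence of two non-commuting involutions is exactly the hard part of the theorem, and your final paragraph only announces a plan (``I would take $g\notin\la a,b\ra$ \dots a Fischer-type analysis \dots forces $G\cong D_6$'') without carrying it out. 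Your own example $\mathrm{Dih}(\Z_3^2)$ shows the two-generated data cannot settle this case, so everything rests on that unexecuted step.

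Moreover, the mechanism you sketch is itself insufficient as stated. Even granting that the subgroup $T$ generated by all involutions, once larger than a single $D_6$, contains a forbidden $D_8$, $D_{12}$ or $\mathrm{Dih}(\Z_3^2)$ (which itself needs an argument: three involutions with pairwise products of order $\le 3$ can generate, e.g., $S_4$ or larger $3$-transposition-type groups, and you must bound and inspect these via the thirteen-graph list, remembering that some of the thirteen, such as the Petersen graph, are not Cayley graphs, and that three-involution connection sets must be fed into the finite computation alongside the $\{x,y,y^{-1}\}$ sets), you would still only have $T\cong D_6$, not $G\cong D_6$. You must separately exclude proper overgroups of such a $T$: for instance $G=D_6\times\Z_3$ has all its involutions inside one $D_6$, yet an involution $x$ and an element $y'$ of order $3$ outside $T$ generate all of $G$, of order $18$, and the resulting connected cubic Cayley graph is not integral --- this requires an argument about elements of order $3$, $4$ and $6$ outside $T$ that appears nowhere in your proposal. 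Until both the configuration analysis of involutions and this overgroup step are actually proved, the equivalence ``non-commuting involutions $\Rightarrow G\cong D_6$'' remains a genuine gap.
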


By $A_4$ we denote the alternating group of degree $4$.
As a corollary of Theorem~\ref{MW}, all nilpotent groups in $\G_3$ were also found
(see \cite[Corollary~2.8]{MW}), and these are the groups:
\begin{itemize}
\item $U \times V$, where $U$ is trivial or of exponent $3$ and $V \cong \Z_2^n$, $ n \geqslant 0$.
\item The $2$-groups $G$ of exponent $4$ with
$\Omega_1(G) \leqslant Z(G)$.
\end{itemize}

Our goal in this paper is to characterize the non-nilpotent groups in class $\A_3$, or equivalently, the non-nilpotent groups having the property in case (ii) of
Theorem~\ref{MW}. We will refer to this property throughout this
article and denote it by (P).

Before presenting our result we set
some notation and definitions.
If $x, y \in G$ for some group $G$, then the {\em conjugate}
$x^y=y^{-1}xy$. A {\em semidirect product} of $G$ with another group $H$ will be denoted by $G \rtimes H$, where $G$ is normal in
$G  \rtimes H$. 
Suppose that $G$ is a transitive permutation group acting on a set
$\Omega$. Then $G$ is said to be a {\em Frobenius group} if the point
stabilizers $G_\alpha, \, \alpha \in \Omega$, are non-trivial, and
no non-identity element of $G$ fixes more than one point
(cf.\  \cite[page~37]{G}).  The elements of $G$ that fix no point in
$\Omega$ together with the identity element
form a regular normal subgroup of $G$, which is called the {\em Frobenius kernel} of $G$. The group $G$ can be expressed as $G=K \rtimes H$, where
$K$ is the Frobenius kernel of $G$ and
$H=G_\alpha$. The group $H$ is also referred to as a {\em Frobenius complement} of $G$. Note that all stabilizers $H_\alpha$ are
Frobenius complements.

\begin{rem}\label{rem:Frobenius}
Frobenius groups will arise in later proofs as subgroups of the holomorph of a group $G$. The {\em holomorph} of $G$ is
the semidirect product $G \rtimes \Aut(G)$ (see \cite[page~26]{G}).
Its elements are written uniquely of the form
$g \alpha$, where $g \in G$ and $\alpha \in \Aut(G)$ and
it has a faithful action on $G$ defined by
$x^{g\alpha}=(xg)^\alpha$, where $x, g \in G$ and $\alpha \in \Aut(G)$.
A subgroup $A \leqslant \Aut(G)$ is said to be {\em regular} if every non-identity automorphism in $A$ is {\em fixed-point-free}
(see \cite[page~39]{G}). It follows directly from the definition that,
as a permutation group of $G$ just described, $G \rtimes A$ is a  Frobenius group whose Frobenius kernel is $G$ and $A$ (the stabilizer of
the identity element of $G$) is a Frobenius complement.
\end{rem}

Finally, a $p$-group $P$ is said to be {\em special} if it is either an  elementary abelian $p$-group, or $P^\prime=\Phi(P)=Z(P)$ is an elementary abelian $p$-group (see \cite[page~183]{G}).
Note that, in the latter case, $P$ is of class $2$.
We remark that all special $p$-groups considered
in this paper will be non-abelian.

We are ready to state the main result of this paper.

\begin{thm}\label{main}
Let $G$ be a finite non-nilpotent group.
Then $G$ has property~(P)
if and only if it belongs to one of the following families:
\begin{enumerate}[{\rm (a)}]
\setlength{\itemsep}{0.5\baselineskip}
\item $\Dic(\Z_3^m \times \Z_2) \times \Z_2^n$,
$m \geqslant 1$ and $n \geqslant 0$.
\item $\big( U \times V) \rtimes \la z \ra$, where
$z$ normalizes both $U$ and $V$,
$\la U, z \ra$ is of exponent $3$,
$V=V_1 \times \cdots \times V_n$
and for every $i$, $1 \leqslant i \leqslant n$,
$$
V_i \cong \Z_2^2~\text{and}~\big\la V_i, z \big\ra \cong A_4.
$$
\item Frobenius groups $K \rtimes H$
with Frobenius kernel $K$ being a $2$-group of exponent $4$ and of class $2$ with
$\Omega_1(K) \leqslant Z(K)$ and Frobenius complement $H \cong \Z_3$.
\item $\big(U \rtimes \la z \ra\big) \times V$,
where $\big\la z \big\ra \cong \Z_3$,
$V \cong \Z_2^m$, $m \geqslant 0$,
$U$ is a non-abelian special $2$-group such that
$C_U(z)=\Omega_1(U)=Z(U)$,
$U=Z(U) \prod_{i=1}^nU_i$ and for every $i$, $1 \leqslant i \leqslant n$,
$$
Z(U)U_i \trianglelefteqslant U,~U_i \cong Q_8~\text{and}~
\big\la U_i, z \big\ra \cong \SL(2,3).
$$
Furthermore, for any $i \ne j$, $1 \leqslant i,  j \leqslant n$,
$|U_i \cap U_j| \leqslant 2$.
\end{enumerate}
\end{thm}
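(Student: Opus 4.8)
The plan is to prove both implications, treating the harder necessity direction by first extracting global constraints and then splitting into four cases matching families (a)--(d).

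For necessity, suppose $G$ is non-nilpotent with property~(P). Since $|G|$ is even, $G$ contains an involution $x$, and then every $y\in G$ lies in $\la x,y\ra$, which by~(P) is one of the listed groups; as each of these has element orders only in $\{1,2,3,4,6\}$, I conclude $o(y)\in\{1,2,3,4,6\}$ for all $y$, so $G$ is a $\{2,3\}$-group. The key observation is that $A_4$ is the \emph{only} non-abelian group on the list and it has no element of order $4$ or $6$; hence an involution $x$ commutes with every $y$ unless $o(y)=3$ and $\la x,y\ra\cong A_4$. In particular any two involutions commute, so $T:=\{g\in G: g^2=1\}$ is a normal elementary abelian $2$-subgroup, and an involution is central in $G$ precisely when it commutes with every element of order $3$.

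Next I would pin down the Sylow subgroups. Property~(P) passes to subgroups, so a Sylow $2$-subgroup $Q$ is a nilpotent group with~(P); invoking the classification of the nilpotent groups in $\G_3$ quoted above forces $Q$ to be either elementary abelian or of exponent $4$ with $\Omega_1(Q)\leqslant Z(Q)$, while a Sylow $3$-subgroup has exponent $3$. I then split according to the two binary invariants suggested by the families: whether $T\leqslant Z(G)$ (all involutions central) and whether $Q$ has exponent $2$ or $4$. Since $G$ is non-nilpotent, the case $T\leqslant Z(G)$ with $Q$ elementary abelian cannot occur (it would make $Q\leqslant Z(G)$, so $G$ nilpotent), leaving exactly four cases that I claim correspond to (a)--(d): all involutions central with $Q$ abelian of exponent $4$ gives the dicyclic family (a); all central with $Q$ non-abelian gives the $\SL(2,3)$-family (d); some involution non-central with $Q$ elementary abelian gives the $A_4$-family (b); and some involution non-central with $Q$ of exponent $4$ gives the Frobenius family (c).

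To realise each case I would analyse the action of an order-$3$ element $z$ on $T$ and on $Q$ via the dichotomy lemma: $z$ fixes the involutions it centralises and permutes the others in $A_4$-blocks, so $T=C_T(z)\oplus[T,z]$ with $[T,z]$ a sum of the $2$-dimensional irreducible $\Z_3$-modules over $\GF(2)$. In case (b) this directly yields the $V_i\cong\Z_2^2$ with $\la V_i,z\ra\cong A_4$, together with the exponent-$3$ part $\la U,z\ra$ centralising $Q$. In the all-central cases $z$ centralises $\Omega_1(Q)=T$; when $Q$ is abelian a single order-$4$ element inverts the $3$-part and I recover $\Dic(\Z_3^m\times\Z_2)\times\Z_2^n$, and when $Q$ is non-abelian the induced action on $Q/Z(Q)$ builds the quaternion factors $U_i$ with $\la U_i,z\ra\cong\SL(2,3)$. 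Finally, in case (c) I would show that no element has order $6$, so $z$ acts fixed-point-freely on $Q$; this makes $G$ a Frobenius group with kernel $Q$, and since a Frobenius complement has cyclic Sylow subgroups at odd primes, the complement, being a $3$-group of exponent $3$, is exactly $\Z_3$. For sufficiency I would run the dichotomy lemma in reverse on each family: in (a) and (d) all involutions are central, so every $\la x,y\ra$ is abelian, generated by an involution together with an element of order in $\{1,2,3,4,6\}$, and a direct check shows each such group is on the list; while in (b) and (c) the only non-commuting pairs with first entry an involution force $\la x,y\ra\cong A_4$.

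The hard part will be the two structure theorems in the exponent-$4$ cases. For family (d) the main obstacle is proving that the special group $U$ decomposes as $Z(U)\prod_{i=1}^n U_i$ with each $U_i\cong Q_8$, $\la U_i,z\ra\cong\SL(2,3)$ and $|U_i\cap U_j|\leqslant2$; this requires controlling the $\Z_3$-action on $U/Z(U)$ over $\GF(2)$ and lifting each $2$-dimensional block to a quaternion factor compatibly with the central involution. For family (c) the delicate step is excluding order-$6$ elements, equivalently a $3$-element centralising an involution, in the presence of order-$4$ elements once some involution is non-central, which is what forces the fixed-point-free action and hence the full Frobenius structure.
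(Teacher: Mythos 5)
Your outline reproduces much of the paper's scaffolding (element orders in $\{1,2,3,4,6\}$, commuting involutions, exponent conditions on Sylow subgroups, a fixed-point-free/centralizing dichotomy for an order-$3$ element, cyclicity of odd-order Frobenius complements, and essentially the paper's sufficiency check), but the necessity direction has a genuine gap: your four-way split by the invariants ``are all involutions central?'' and ``does $Q$ have exponent $2$ or $4$?'' silently assumes that in each bucket the relevant Sylow subgroup is normal, and nothing in your sketch delivers this. The paper splits instead by normality of $G_3$ and $G_2$, and the bulk of its effort goes into the residual configuration where \emph{neither} Sylow subgroup is normal (its Case~4): there $|G_3|=3$, $U:=O_2(G)$ has index $2$ in $G_2$, $G/U \cong D_6$, $N_G(G_3)$ contains an element $w$ of order $4$ with $z^w=z^{-1}$, and $C_U(z)=\Omega_1(U)\ne 1$, so elements of order $6$ \emph{do} exist. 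Ruling this out is the hardest step of the paper --- a commutator computation using $P^\prime \leqslant \Omega_1(P) \leqslant Z(P)$ for $P=U\la w \ra$, the relation $uu^zu^{z^2}\in \Omega_1(U)$, and $z^w=z^{-1}$, ending with an element of order $4$ forced to equal one of order at most $2$. Your proposal has no counterpart: a hypothetical Case-4 group would land in one of your buckets (in your (c)-bucket if some involution is non-central), and there your plan to ``show that no element has order $6$'' is exactly false for it; the correct conclusion is that such a group does not exist, which your two invariants cannot detect. Relatedly, in your (a)-bucket you need $G_3 \trianglelefteqslant G$ before an order-$4$ element inverting the $3$-part can recover $\Dic(\Z_3^m \times \Z_2)\times \Z_2^n$; centrality of the involutions plus commutativity of $Q$ does not give this for free --- the paper extracts it from Burnside's normal $p$-complement theorem applied to $N_G(G_3)$, the lemma on groups with nilpotent derived subgroup, and again the Case-4 exclusion.

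A secondary point: the dichotomy you lean on throughout --- for $z$ of order $3$ normalizing $G_2$, either $C_{G_2}(z)=1$ with $\la u,u^z\ra \cong \Z_4^2$ for each $u$ of order $4$, or $C_{G_2}(z)=\Omega_1(G_2)$ with $\la u,u^z\ra \cong Q_8$ --- is itself a substantive lemma (the paper's Lemma~3.3), whose proof needs $O_3=1$ reductions, the bound $|\la u,u^z\ra|\leqslant 16$, R\'edei's classification of minimal non-abelian $2$-groups via Proposition~\ref{EK2}, and automorphism-order computations to eliminate $H_{16}$; you treat it as immediate. (Also minor: property~(P) passes only to subgroups of \emph{even} order, since (P) presupposes an involution.) The pieces you flag as ``hard'' in family (d) --- lifting the $2$-dimensional $\GF(2)\la z\ra$-blocks of $U/Z(U)$ to $Q_8$-factors with $\la U_i,z\ra \cong \SL(2,3)$ --- are indeed done in the paper much as you describe, so that part of the plan is sound; the unfixed hole is the absence of any mechanism to prove Sylow normality and to exclude the both-Sylows-non-normal configuration.
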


\begin{rem}\label{r:family-b}
Let $G$ be any $3$-group of exponent $3$ and of order $3^n$ with $n\geqslant 1$ and let $M$ be a maximal subgroup of $G$. Then $|M|=3^{n-1}$ and $M \trianglelefteqslant G$. Picking $z\in G$ but $z \not\in M$, we have $G=M\rtimes \la z \ra$. This implies that a group in family (b) can have any non-trivial $3$-group of exponent $3$ as its Sylow
$3$-subgroup.
\end{rem}

\begin{rem}\label{r:family-c}
Note that determining the groups in family (c) is equivalent to finding the $2$-groups $G$ that satisfy the following conditions:
the exponent of $G$ is $4$, $\Omega_1(G) \leqslant Z(G)$ and $G$ admits a
fixed-point-free automorphism of order $3$.
In Section~\ref{sec:5}, we will study the possible isomorphism type of a minimal non-abelian subgroup of such $2$-groups. A group is {\em minimal non-abelian} if it is
non-abelian and all of its proper subgroups are abelian. It will be shown that there are only two possibilities: the metacyclic group $H_{16}$ of order $16$ defined by
\begin{equation}\label{eq:H16}
H_{16} = \big\langle a, b \mid a^4=b^4=1, b^{-1}ab=a^{-1} \big\rangle;
\end{equation}
and the non-metacyclic group $H_{32}$ of order $32$ defined by
\begin{equation}\label{eq:H32}
H_{32} =\big\langle a, b, c \mid a^4=b^4=1, c^2=1, [a,b]=c, [a,c]=[b,c]=1 \big\rangle.
\end{equation}
Non-abelian $2$-groups with prescribed minimal non-abelian subgroups of exponent $4$ were studied by Janko~\cite{Ja}.  In Section~\ref{sec:5}, we will also discuss the
consequence of a result in \cite{Ja} when applied to the Frobenius kernels in family (c) (see Proposition~\ref{P:family-c}).
\end{rem}

\begin{rem}\label{r:family-d}
Similarly, determining the groups in family (d) is equivalent to finding the non-abelian
$2$-groups $G$ that satisfy the following conditions: $G$ is a special $2$-group,
$\Omega_1(G)=Z(G)$ and $G$ admits an automorphism of order $3$ whose fixed points comprise $Z(G)$. Examples of such $2$-groups are the Sylow
$2$-subgroups of the {\em special unitary groups} $\SU(3,q)$, $q=2^{2n+1}$ and
$n \ge 1$. Indeed, such a group admits the following matrix representation (see \cite{C}):
$$
G=\Big\{ {\small \begin{pmatrix} 1 & a & b \\ 0 & 1 & \overline{a} \\ 0 & 0 & 1
\end{pmatrix}} : a, b \in \GF(q^2),~a \overline{a}=b + \overline{b} \Big\},
$$
where $x \mapsto \overline{x}$ is the automorphism of $\GF(q^2)$ of order $2$.
This shows that 
$$
|G|=q^3=2^{6n+3}.
$$
It follows that $G^\prime=\Phi(G)=Z(G)=\Omega_1(G)$ consisting of
those matrices with $a=0$. This is an elementary abelian group of order $q=2^{2n+1}$
and we obtain that $G$ is a special group. Take now the diagonal matrix $z=\mathrm{Diag}(\lambda,1,\lambda)$ where $\lambda^3=1$ and $\lambda \ne 1$.
One can quickly check that $z$ normalizes $G$ and $C_G(z)=\Omega_1(G)$, in other
words, the action of $z$ on $G$ by conjugation induces the required automorphism
of order $3$. Another infinite family of $2$-groups with the aforementioned properties
will be exhibited in Section~\ref{sec:5} (see Proposition~\ref{P:family-d}).
\end{rem}

The paper is organized as follows: Section~\ref{sec:known} contains the preliminary
results needed in the proof of Theorem~\ref{main}.  Following several preparatory
lemmas derived in Section~\ref{sec:lemmas}, the proof is presented in Section~\ref{sec:proof}. Finally, Section~\ref{sec:5} contains some further discussion on the groups in families (c) and (d).
\section{Preliminaries}\label{sec:known}

All groups in this paper will be finite.
Our terminology and notation from group theory are standard and we follow the book \cite{G}. For an easier reading of the text we collect below some results.

To start with, we review a couple of the properties of a {\em fixed-point-free} automorphism.

\begin{prop}\label{fixed-point-free}
Let $\alpha$ be a fixed-point-free automorphism of a group $G$ such that $\alpha$ has order $n$ and let $x \in G$. Then the following conditions hold:
\begin{enumerate}[{\rm (i)}]
\item {\rm (\cite[Lemma~10.1.1(ii)]{G})}
$x (x^\alpha) \cdots (x^{\alpha^{n-1}})=1$;
\item {\rm (\cite[Theorem~10.1.4]{G})}
if $n=2$, then $G$ is abelian and $x^\alpha=x^{-1}$;
\item {\rm (\cite[Theorem~10.1.5]{G})} 
if $n=3$, then $G$ is nilpotent and $x$ commutes with $x^\alpha$.
\end{enumerate}
\end{prop}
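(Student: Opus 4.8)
The common engine behind all three parts is the displacement map $\theta\colon G \to G$, $\theta(x)=x^{-1}x^\alpha$. The plan is to first show that $\theta$ is injective: if $x^{-1}x^\alpha=y^{-1}y^\alpha$, then rearranging gives $yx^{-1}=y^\alpha(x^\alpha)^{-1}=(yx^{-1})^\alpha$, so $yx^{-1}$ is fixed by $\alpha$ and hence equals $1$ by fixed-point-freeness, forcing $x=y$. Since $G$ is finite, $\theta$ is a bijection, so every $g \in G$ can be written as $g=x^{-1}x^\alpha$ for some $x$. This surjectivity is what makes (i) work.

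For (i), I would feed such a representation into a telescoping product. Writing $g=x^{-1}x^\alpha$ and applying powers of $\alpha$, the factor $g^{\alpha^i}$ equals $(x^{\alpha^i})^{-1}x^{\alpha^{i+1}}$, and because $\alpha^n=1$ the product $g\,g^\alpha \cdots g^{\alpha^{n-1}}$ collapses (adjacent factors cancel) to $x^{-1}x=1$. As $g$ ranges over all of $G$ by surjectivity of $\theta$, this proves $x\,x^\alpha\cdots x^{\alpha^{n-1}}=1$ for every $x$. Part (ii) is then immediate: when $n=2$, part (i) reads $x^\alpha=x^{-1}$ for all $x$, and comparing $(xy)^\alpha=x^\alpha y^\alpha=x^{-1}y^{-1}$ with $(xy)^\alpha=(xy)^{-1}=y^{-1}x^{-1}$ forces $xy=yx$, so $G$ is abelian.

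For (iii) with $n=3$, I would extract the commutativity of $x$ with $x^\alpha$ purely from (i). Applying (i) to $x$ gives $x\,x^\alpha x^{\alpha^2}=1$, whence $x^{\alpha^2}=(x^\alpha)^{-1}x^{-1}$; applying (i) to $x^{-1}$ and then inverting the whole relation gives $x^{\alpha^2}x^\alpha x=1$, whence $x^{\alpha^2}=x^{-1}(x^\alpha)^{-1}$. Equating the two expressions yields $(x^\alpha)^{-1}x^{-1}=x^{-1}(x^\alpha)^{-1}$, i.e.\ $x$ commutes with $x^\alpha$. The hard part will be the nilpotency of $G$: the pointwise relation $[x,x^\alpha]=1$ does not by itself give nilpotency, and I would try to promote it to $G^\prime \leqslant Z(G)$ (class at most $2$) by polarizing --- substituting products such as $xy$ into $[x,x^\alpha]=1$ and expanding via the commutator identities $[ab,c]=[a,c]^b[b,c]$ and $[a,bc]=[a,c][a,b]^c$ to obtain identities relating $[x,y^\alpha]$ and $[y,x^\alpha]$. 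The main obstacle is organizing this commutator calculus so that fixed-point-freeness kills the resulting commutators; alternatively, one may simply invoke Thompson's theorem that a fixed-point-free automorphism of prime order forces nilpotency, which disposes of the general case at once.
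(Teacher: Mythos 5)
The paper supplies no proof of this proposition: it is quoted directly from Gorenstein, with parts (i)--(iii) cited as \cite[Lemma~10.1.1(ii), Theorems~10.1.4 and 10.1.5]{G}. Measured against those cited proofs, your arguments for (i) and (ii) are complete, correct, and essentially identical to the classical ones: injectivity of $\theta(x)=x^{-1}x^\alpha$ via the computation $yx^{-1}=(yx^{-1})^\alpha$, surjectivity by finiteness (legitimate here, as the paper declares all groups finite), the telescoping of $g\,g^\alpha\cdots g^{\alpha^{n-1}}$ to $x^{-1}x^{\alpha^n}=1$, and for $n=2$ the observation that $x\mapsto x^{-1}$ is an automorphism only on abelian groups. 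Your derivation of $[x,x^\alpha]=1$ in (iii), by comparing the two expressions for $x^{\alpha^2}$ obtained from applying (i) to $x$ and to $x^{-1}$, is also correct and is the standard argument; note that applying it to $x^\alpha$ and $x^{\alpha^2}$ as well shows $x$, $x^\alpha$, $x^{\alpha^2}$ pairwise commute.

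The only real issue is nilpotency in (iii). Your primary plan --- polarizing $[x,x^\alpha]=1$ via the commutator identities to force $G^\prime \leqslant Z(G)$ --- is not a proof and is unlikely to succeed as described: the polarized identities relate $[x,y^\alpha]$ and $[y,x^\alpha]$ but contain no mechanism by which fixed-point-freeness ``kills'' a commutator unless the bijectivity of $\theta$ is injected globally. A cautionary calibration: with $\alpha=1$ the splitting relation $x\,x^\alpha x^{\alpha^2}=1$ becomes $x^3=1$, and groups of exponent $3$ satisfy all such pointwise identities yet can have class exactly $3$, so commutator calculus on these relations alone cannot yield class $\leqslant 2$; the known elementary proof (Gorenstein's Theorem~10.1.5, the very result cited) instead works with $\alpha$-invariant Sylow subgroups and a minimal counterexample. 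Your fallback --- invoking Thompson's theorem that a fixed-point-free automorphism of prime order forces nilpotency --- does correctly close the gap, and since the statement only claims nilpotency (the class-$\leqslant 2$ refinement is a remark in the paper, not part of the proposition), your proposal is correct modulo that citation, though it is far heavier machinery than the elementary order-$3$ argument the paper cites. You should either cite Thompson cleanly and drop the polarization sketch, or reproduce the Sylow-based argument from \cite[Theorem~10.1.5]{G}.
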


Although it will not be used in this paper, the group $G$ in case (iii) is 
also known to have nilpotency class at most $2$ (\cite[page~90]{B}). 

Recall that, a group $A$ of automorphisms of a group $G$ is regular
if every non-identity element in $A$ is fixed-point free, and
then the semidirect product $G \rtimes A$ is a Frobenius group
(see Remark~\ref{rem:Frobenius}).
We shall use the following property of Frobenius groups.

\begin{prop}\label{Frobenius}
{\rm (cf.\ \cite[Theorem~10.3.1]{G}(iv))}
Let $G=K \rtimes H$ be a Frobenius group with kernel $K$ and complement $H$ such that $|H|$ is odd. Then every Sylow $p$-subgroup of $H$ is cyclic.
\end{prop}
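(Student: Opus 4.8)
The plan is to exploit the defining feature of a Frobenius group: the complement $H$ acts on the kernel $K$ by conjugation so that each non-identity element of $H$ induces a fixed-point-free automorphism of $K$ (this is precisely the Frobenius condition on $G=K\rtimes H$, as recorded in Remark~\ref{rem:Frobenius}), and moreover $\gcd(|K|,|H|)=1$. Since $|H|$ is odd, a Sylow $p$-subgroup of $H$ is a $p$-group with $p$ odd. I would then invoke the classical theorem that a $p$-group containing no subgroup isomorphic to $\Z_p \times \Z_p$ is cyclic or generalized quaternion; as generalized quaternion groups have even order, the case $p$ odd leaves only the cyclic possibility. Thus it suffices to prove that $H$ has no subgroup isomorphic to $\Z_p \times \Z_p$.

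Assume for contradiction that $A \leqslant H$ with $A \cong \Z_p \times \Z_p$. Then $A$ acts fixed-point-freely on $K$, and $\gcd(|A|,|K|)=1$. By coprime action, $A$ normalizes some Sylow $q$-subgroup $Q$ of $K$ for a prime $q \mid |K|$ (necessarily $q \neq p$), and $A$ acts fixed-point-freely on $Q$ since $C_Q(B) \leqslant C_K(B)=1$ for every non-identity $B \leqslant A$. Passing to the Frattini quotient $V=Q/\Phi(Q)$ and using coprimality once more (so that $C_V(B)=C_Q(B)\Phi(Q)/\Phi(Q)=1$), I obtain a nonzero $\GF(q)$-vector space $V$ on which $A$ acts, with $C_V(B)=0$ for every non-identity $B \leqslant A$.

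The contradiction comes from a fixed-point count for the coprime action of $A$ on $V$. Let $B_0,B_1,\dots,B_p$ be the $p+1$ subgroups of $A$ of order $p$. Decomposing $V \otimes \overline{\GF(q)}$ into eigenspaces $V_\chi$ indexed by the characters $\chi\colon A \to \overline{\GF(q)}^{*}$, one checks that each non-trivial character has kernel equal to exactly one $B_i$, while the trivial character is trivial on all of them; counting dimensions yields
$$
\sum_{i=0}^{p} \dim C_V(B_i) = \dim V + p \cdot \dim C_V(A).
$$
Fixed-point-freeness forces every term $\dim C_V(B_i)$ on the left and $\dim C_V(A)$ on the right to vanish, so $\dim V=0$, contradicting $V \neq 0$. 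Hence no subgroup $\Z_p \times \Z_p$ exists in $H$, every Sylow $p$-subgroup is cyclic, and the proposition follows. I expect the displayed fixed-point identity to be the only non-routine step; it can also simply be quoted from the coprime-action toolkit, but the eigenspace bookkeeping above keeps the argument self-contained.
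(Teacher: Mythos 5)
Your proof is correct, but there is nothing in the paper to compare it against line by line: Proposition~\ref{Frobenius} is stated as a known preliminary and simply cited from \cite[Theorem~10.3.1]{G}, with no internal proof. What you have written is essentially the classical argument underlying that cited theorem, and all of its steps check out: the reduction to excluding $A \cong \Z_p \times \Z_p$ via the cyclic-or-generalized-quaternion theorem is legitimate (note the small intermediate step that the absence of $\Z_p \times \Z_p$ forces a unique subgroup of order $p$ --- compare any order-$p$ subgroup with a central one --- before quoting \cite[Theorem~5.4.10]{G}); the fixed-point-free action of $H$ on $K$ follows in one line from the paper's permutation-theoretic definition of a Frobenius group (if $1 \ne h \in H$ centralized $1 \ne k \in K$, then $h$ would fix the two distinct points $\alpha$ and $\alpha^k$, using regularity of $K$), though strictly speaking Remark~\ref{rem:Frobenius} records the converse construction rather than this fact; the coprimality $\gcd(|K|,|H|)=1$, which you assert, follows since $H$ then acts semiregularly on $K \setminus \{1\}$, so $|H|$ divides $|K|-1$; and the existence of an $A$-invariant Sylow $q$-subgroup of $K$ is \cite[Theorem~6.2.2]{G}, whose solvability hypothesis is met because $A$ is an (abelian) $p$-group, so no appeal to deep theorems is needed. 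Your eigenspace count, including the identity $\sum_{i=0}^{p} \dim C_V(B_i) = \dim V + p \cdot \dim C_V(A)$, is also correct, since every nontrivial character of $A$ over $\overline{\GF(q)}$ has kernel exactly one $B_i$ and fixed points commute with scalar extension. The one economy worth pointing out is that this entire final stage can be replaced by a single citation that the paper itself already uses for exactly this purpose in Lemma~\ref{L:structure}(iii): by \cite[Theorem~5.3.16]{G}, a noncyclic abelian group acting coprimely on the $q$-group $Q$ satisfies $Q = \big\langle C_Q(a) : a \in A \setminus \{1\} \big\rangle = 1$, an immediate contradiction, with no need to pass to $Q/\Phi(Q)$ or extend scalars. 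Your version buys self-containedness (it reproves the generation theorem in the special case needed); the shortcut buys brevity and consistency with the paper's toolkit.
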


\begin{prop}\label{Glemma}
{\rm (\cite[Lemma~10.5.4]{G})}
Let $G$ be a group in which $G^\prime$ is nilpotent and let $P$ be a Sylow $p$-subgroup of $G$. Then $G=O_{p^\prime}(G^\prime)N_G(P)$.
\end{prop}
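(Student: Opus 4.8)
The plan is to deduce the identity from a Frattini argument, after isolating the right normal subgroup. Put $N=O_{p^\prime}(G^\prime)$. Since $G^\prime$ is nilpotent, it is the direct product of its Sylow subgroups; hence $N$ is exactly the Hall $p^\prime$-subgroup of $G^\prime$ and $G^\prime=(G^\prime\cap P)\times N$, where $G^\prime\cap P=O_p(G^\prime)$ is the (unique) Sylow $p$-subgroup of $G^\prime$. As $N$ is characteristic in $G^\prime$ and $G^\prime\trianglelefteqslant G$, we have $N\trianglelefteqslant G$; this $N$ will serve as the normal subgroup in the Frattini argument, while $P$ will play the role of its Sylow $p$-subgroup.

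The first step is to show that $PN\trianglelefteqslant G$. Here I would observe that $G^\prime\cap P\leqslant P$, so that $PN=P\,G^\prime$. Since $G/G^\prime$ is abelian, the image $PG^\prime/G^\prime$ is a Sylow $p$-subgroup of an abelian group and is therefore its unique, hence normal, Sylow $p$-subgroup; pulling back, $PG^\prime=PN$ is normal in $G$.

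The second step is the Frattini argument itself. Because $N$ is a $p^\prime$-group and $G^\prime\cap P\leqslant P$, one checks that $P$ is a Sylow $p$-subgroup of $PN$. Applying the Frattini argument to the normal subgroup $PN\trianglelefteqslant G$ and its Sylow $p$-subgroup $P$ gives $G=(PN)\,N_G(P)$; and since $P\leqslant N_G(P)$ while $N\trianglelefteqslant G$, the right-hand side collapses to $N\,N_G(P)=O_{p^\prime}(G^\prime)N_G(P)$, which is the claim. The only point requiring care is the normality of $PN$ in $G$ in the first step; once that is secured the rest is routine, so I expect no serious obstacle beyond correctly exploiting the nilpotency of $G^\prime$ to split off the $p^\prime$-part.
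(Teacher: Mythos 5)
Your proof is correct and complete: the splitting $G^\prime=O_p(G^\prime)\times O_{p^\prime}(G^\prime)$ from nilpotency, the identification $PN=PG^\prime$ with normality of $PG^\prime$ coming from $G/G^\prime$ being abelian, and the Frattini argument applied to $P\in\mathrm{Syl}_p(PN)$ all check out (the one assertion you leave implicit, that $G^\prime\cap P$ is a Sylow $p$-subgroup of $G^\prime$ and hence equals $O_p(G^\prime)$, is the standard fact that a normal subgroup meets a Sylow subgroup in one of its own Sylow subgroups). Note that the paper itself supplies no proof of this proposition---it is quoted from Gorenstein, Lemma~10.5.4---and your argument is essentially the standard textbook proof of that lemma, so there is no divergence to report.
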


Recall that, a group is minimal non-abelian if it is
non-abelian and all of its proper subgroups are abelian.

\begin{prop}\label{BJlemma}
{\rm (\cite[Lemma~3.2(a)]{BJ}).}
Let $G$ be a two-generator $p$-group such that $|G^\prime|=p$.
Then $G$ is minimal non-abelian.
\end{prop}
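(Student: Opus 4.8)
The plan is to show that every proper subgroup of $G$ is abelian; combined with the fact that $G$ is non-abelian (since $|G^\prime|=p>1$), this yields that $G$ is minimal non-abelian. As every proper subgroup is contained in a maximal subgroup and subgroups of abelian groups are abelian, it suffices to prove that each maximal subgroup of $G$ is abelian. Before doing so I would record two structural facts about $G$. First, since $G^\prime$ is a normal subgroup of order $p$ in a $p$-group, the conjugation action of $G$ on $G^\prime$ induces a homomorphism into $\Aut(G^\prime)\cong\Z_{p-1}$, whose image is trivial because $G$ is a $p$-group; hence $G^\prime\leqslant Z(G)$ and $G$ has nilpotency class at most $2$.

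The key step is to prove that the Frattini subgroup is central, that is, $\Phi(G)\leqslant Z(G)$. For a finite $p$-group one has $\Phi(G)=G^\prime G^p$, and $G^\prime\leqslant Z(G)$ has already been established, so it remains only to check $G^p\leqslant Z(G)$. Working in the class-$2$ group $G$, every commutator is central, so the commutator map is bilinear and a short induction gives $[x^p,y]=[x,y]^p$ for all $x,y\in G$. Since $G^\prime$ is cyclic of order $p$ it has exponent $p$, whence $[x,y]^p=1$ and therefore $[x^p,y]=1$ for all $y$. Thus $x^p\in Z(G)$ for every $x\in G$, so $G^p\leqslant Z(G)$ and consequently $\Phi(G)\leqslant Z(G)$.

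Finally I would exploit the two-generator hypothesis. By the Burnside basis theorem $G/\Phi(G)$ is elementary abelian of rank equal to the minimal number of generators of $G$; as $G$ is non-abelian it is non-cyclic, so this rank is exactly $2$ and $G/\Phi(G)\cong\Z_p^2$. Each maximal subgroup $M$ corresponds to an index-$p$, hence $1$-dimensional, subspace of $G/\Phi(G)$, so $M/\Phi(G)$ is cyclic and $M=\la \Phi(G), c\ra=\la c \ra\Phi(G)$ for a single element $c$. Since $\Phi(G)\leqslant Z(G)$ is central, it commutes with $c$, and thus $M$ is generated by pairwise commuting elements and is abelian, as required.

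I expect the main obstacle to be the centrality of $\Phi(G)$: this is precisely where both hypotheses are needed, namely the class-$2$ property (forced by $|G^\prime|=p$) to linearize the commutator and the smallness $\exp(G^\prime)=p$ to annihilate the $p$-th powers. The two-generator assumption then enters only at the end, where it forces each maximal subgroup to be cyclic modulo $\Phi(G)$ and hence abelian; for groups requiring three or more generators the maximal subgroups would no longer be of this simple form, so the statement genuinely relies on all of the hypotheses.
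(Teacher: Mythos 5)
Your proof is correct and complete. Note first that the paper itself offers no argument for this statement: it is quoted verbatim from Berkovich--Janko \cite[Lemma~3.2(a)]{BJ}, so there is no in-paper proof to compare against; what you have written is a self-contained substitute for the citation. Each step checks out: $G'\leqslant Z(G)$ follows because the conjugation action on the normal subgroup $G'$ of order $p$ lands in $\Aut(G')\cong\Z_{p-1}$ and the image of a $p$-group there is trivial; in the resulting class-$2$ group the identity $[x^p,y]=[x,y]^p$ is valid (commutators being central makes $[xy,z]=[x,z][y,z]$, and induction does the rest), and $\exp(G')=p$ then forces $G^p\leqslant Z(G)$, so $\Phi(G)=G'G^p\leqslant Z(G)$ by the Burnside basis theorem (Proposition~\ref{burnsidebasis} of the paper). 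The endgame is also sound: $G$ non-abelian implies non-cyclic, so the two-generator hypothesis gives $G/\Phi(G)\cong\Z_p^2$ exactly, every maximal subgroup $M$ satisfies $M=\la c\ra\Phi(G)$ with $\Phi(G)$ central and is therefore abelian, and every proper subgroup sits inside a maximal one. Your closing remark about necessity of the two-generator hypothesis is apt: an extraspecial group of order $p^5$ has $|G'|=p$ but contains non-abelian proper (extraspecial) subgroups of order $p^3$, so the hypothesis cannot be dropped.
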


Let $G$ be a finite $p$-group.
Set $G^p=\la g^p\ |\ g\in G \ra$.
The following theorem is Burnside's basis theorem.

\begin{prop}\label{burnsidebasis}
{\rm (\cite[5.3.2]{Ro})}  Let $G$ be a finite
$p$-group. Then $\Phi(G)=G^\prime G^p$. If $|G:\Phi(G)|=p^r$, then every
generating set of $G$ has a subset of $r$ elements which also
generates $G$. In particular, $G/\Phi(G) \cong \Z_p^r$.
\end{prop}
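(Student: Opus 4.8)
The plan is to use the standard characterization of the Frattini subgroup $\Phi(G)$ as the intersection of all maximal subgroups of $G$, together with the fact that a finite $p$-group is nilpotent. First I would record that in a nilpotent group every maximal subgroup is normal, so for a $p$-group $G$ each maximal subgroup $M$ has index $p$ and $G/M \cong \Z_p$. Since $G/M$ is abelian of exponent $p$, it follows that both $G^\prime \leqslant M$ and $G^p \leqslant M$, whence $G^\prime G^p \leqslant M$; taking the intersection over all maximal subgroups gives $G^\prime G^p \leqslant \Phi(G)$.

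For the reverse inclusion I would pass to the quotient $V = G/G^\prime G^p$. Because $G^\prime \leqslant G^\prime G^p$ the quotient $V$ is abelian, and because $g^p \in G^\prime G^p$ for every $g$ it has exponent $p$; hence $V$ is elementary abelian, i.e.\ a vector space over the field $\GF(p)$. The maximal subgroups of $G$ that contain $G^\prime G^p$ correspond exactly to the hyperplanes of $V$, and the intersection of all hyperplanes of a vector space is trivial. Since every maximal subgroup of $G$ already contains $G^\prime G^p$ by the previous paragraph, the intersection of all maximal subgroups is the full preimage of $\{0\}$, that is $\Phi(G) = G^\prime G^p$. This simultaneously yields $G/\Phi(G) = V \cong \Z_p^r$ once we set $|G:\Phi(G)| = p^r = |V|$, so that $r = \dim_{\GF(p)} V$.

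It remains to prove the generating-set statement, for which I would invoke the non-generator property of $\Phi(G)$: if $G = \la Y, \Phi(G) \ra$ then already $G = \la Y \ra$. Indeed, were $\la Y \ra$ proper it would lie in some maximal subgroup $M$; but $\Phi(G) \leqslant M$ as well, forcing $G = \la Y, \Phi(G)\ra \leqslant M$, a contradiction. Now given any generating set $X$ of $G$, its image $\overline{X}$ spans the $r$-dimensional space $V$, so some $r$-element subset $\{x_1, \dots, x_r\} \subseteq X$ maps to a basis of $V$. Then $\la x_1, \dots, x_r, \Phi(G)\ra = G$, and the non-generator property gives $\la x_1, \dots, x_r \ra = G$, as required.

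The only genuinely delicate points are the identification of the maximal subgroups of $G$ with the hyperplanes of $V$ and the normality/index-$p$ claim, both of which rest on the nilpotence of $p$-groups; everything else is routine linear algebra over $\GF(p)$. I would take particular care to justify that the reduction of a spanning set to a basis can be carried out \emph{within} the prescribed generating set $X$, rather than merely inside $V$, since this is exactly what makes the final subset a subset of $X$.
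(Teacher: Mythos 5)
Your proof is correct and complete: the paper itself gives no argument for this proposition, citing it as \cite[5.3.2]{Ro}, and what you have written is precisely the standard textbook proof found there --- maximal subgroups of a finite $p$-group are normal of index $p$, the quotient $G/G^\prime G^p$ is a $\GF(p)$-vector space whose hyperplanes correspond to the maximal subgroups, and the non-generator property of $\Phi(G)$ finishes the basis-extraction step. Your closing remark about extracting the basis from the images of the prescribed set $X$ (rather than from an arbitrary spanning set of $V$) is exactly the right point to flag, and your handling of it is sound.
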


The following result is Burnside's $p$-complement theorem.

\begin{prop}\label{Burnside}
{\rm (cf.\ \cite[Theor                                                                                                                                                                                                                                                                                                                                                                                                                                                                                                                                                                                                                                                                                                                                                                                                                                                                                                                                                                                                                                                                                                                                                                                                                                                                                                                                                                                        em~7.4.3]{G})}
If a Sylow $p$-subgroup $P$ of a group $G$ lies in $Z(N_G(P))$, then $G$ has a normal
$p$-complement, i.e. a normal subgroup $N$ such that $|N|=|G : P|$.
\end{prop}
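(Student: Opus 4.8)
The plan is to prove this via the \emph{transfer homomorphism}, which is the classical route to Burnside's normal $p$-complement theorem. First I would record the two immediate consequences of the hypothesis $P \leqslant Z(N_G(P))$: since $P \leqslant N_G(P)$ lies in the center of $N_G(P)$, the subgroup $P$ commutes with itself and is therefore abelian, and moreover every element of $N_G(P)$ centralizes $P$. Because $P$ is abelian we have $P^\prime=1$, so the transfer of $G$ into $P$ is a homomorphism $V \colon G \to P/P^\prime = P$. The goal is to show that $V$ is surjective; granting this, $N:=\ker V$ is a normal subgroup with $G/N \cong P$, whence $|N|=|G:P|$, and since $\gcd(|P|,|G:P|)=1$ this $N$ is the desired normal $p$-complement.

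The heart of the argument is a fusion statement: \emph{if $x,y \in P$ are conjugate in $G$, then $x=y$}. To obtain this I would invoke Burnside's fusion theorem, which asserts that two subsets of $Z(P)$ that are conjugate in $G$ are already conjugate in $N_G(P)$. As $P$ is abelian, $Z(P)=P$, so the theorem applies to the singletons $\{x\},\{y\}\subseteq P$, giving $y=x^g$ for some $g\in N_G(P)$; but $g$ centralizes $P$ by hypothesis, forcing $y=x$. In effect $P$ has trivial fusion in $G$.

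Next I would evaluate the transfer on $P$. Using the orbit (or ``cycle'') decomposition of the transfer, for each $x \in P$ one writes $V(x)=\prod_k g_k\, x^{n_k}\, g_k^{-1}$, where the exponents satisfy $\sum_k n_k = |G:P|$ and each factor $g_k x^{n_k} g_k^{-1}$ lies in $P$. Since each such factor is a $G$-conjugate of the element $x^{n_k}\in P$, the fusion statement collapses it to $x^{n_k}$ itself, and hence
\[
V(x) = \prod_k x^{n_k} = x^{\sum_k n_k} = x^{|G:P|}.
\]
As $|G:P|$ is coprime to $|P|$, the map $x \mapsto x^{|G:P|}$ is an automorphism of the abelian $p$-group $P$; in particular $V$ restricts to a surjection $P \to P$, so $V$ is onto, and the normal $p$-complement follows as above.

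The main obstacle is the transfer evaluation step: one must set up the transfer carefully, justify the decomposition $V(x)=\prod_k g_k x^{n_k} g_k^{-1}$ with $\sum_k n_k = |G:P|$ and each factor inside $P$, and then combine it cleanly with Burnside's fusion theorem. The fusion theorem has a short transfer-free proof (a counting argument based on conjugating Sylow subgroups that share a common $p$-element within a normalizer), but coordinating it with the transfer computation is the technical core. Everything else --- the reduction to surjectivity of $V$ and the observation that raising to the coprime power $|G:P|$ is bijective on $P$ --- is routine.
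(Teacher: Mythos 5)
Your proposal is correct and takes essentially the same approach as the paper's source: the paper states this result without proof, citing Gorenstein's Theorem~7.4.3, whose proof is precisely your argument --- Burnside's fusion theorem applied to the abelian Sylow subgroup $P$ (where $N_G(P)=C_G(P)$ forces trivial fusion), combined with the transfer evaluation $V(x)=x^{|G:P|}$ and the observation that the coprime power map is an automorphism of $P$. All steps, including the reduction to surjectivity of the transfer and the identification of $\ker V$ as the normal $p$-complement, are sound as written.
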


R\'edei~\cite{R} classified  the minimal non-abelian $p$-groups and it follows from his result that there are exactly three minimal non-abelian $2$-groups $G$ of exponent $4$ having the property that $\Omega_1(G) \leqslant Z(G)$. Namely,
$G \cong Q_8$ or $H_{16}$ or $H_{32}$, where $H_{16}$ is the
metacyclic group of order $2^4$ defined in Eq.~\eqref{eq:H16}
and $H_{32}$ is the non-metacyclic group of order $2^5$ defined in
Eq.~\eqref{eq:H32}.

\begin{prop}\label{EK2}
{\rm (cf.\ \cite[Proposition~12]{EK} and \cite[Corollary~2.8]{MW})}
Let $G$ be a non-abelian $2$-group of exponent $4$.
The following statements are equivalent:
\begin{enumerate}[{\rm (i)}]
\item The group $G$ has property~(P).
\item $\Omega_1(G) \leqslant Z(G)$.
\item Every minimal non-abelian subgroup of $G$ is isomorphic to one of the following groups: $Q_8, \, H_{16}$ and $H_{32}$.
\end{enumerate}
\end{prop}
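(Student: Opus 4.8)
The plan is to prove the two equivalences (i) $\Leftrightarrow$ (ii) and (ii) $\Leftrightarrow$ (iii) separately, the only substantial part being (iii) $\Rightarrow$ (ii). Throughout I would use that $G$ has exponent $4$, so every element has order $1,2$ or $4$, and that the only groups among those listed in property~(P) that are $2$-groups are the abelian groups $\Z_2,\Z_2^2,\Z_4$ and $\Z_2\times\Z_4$. For (i) $\Rightarrow$ (ii) I would take an involution $x$ and arbitrary $y\in G$; then $\la x,y\ra$ is a $2$-group, so by (P) it is one of these four abelian groups, and in particular $x$ commutes with $y$. As $y$ is arbitrary this gives $x\in Z(G)$, hence $\Omega_1(G)\leqslant Z(G)$. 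For the converse (ii) $\Rightarrow$ (i), every involution $x$ then lies in $Z(G)$, so for each $y$ the group $\la x,y\ra$ is abelian and generated by an involution together with an element of order at most $4$; a short case check shows it is one of $\Z_2,\Z_2^2,\Z_4,\Z_2\times\Z_4$, so (P) holds.

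For (ii) $\Rightarrow$ (iii) I would pass to a minimal non-abelian subgroup $M$ of $G$. Its exponent divides $4$ and, as $M$ is non-abelian, is neither $1$ nor $2$, so $M$ has exponent $4$; moreover every involution of $M$ lies in $\Omega_1(G)\leqslant Z(G)$, hence in $Z(M)$, so $\Omega_1(M)\leqslant Z(M)$. By R\'edei's classification recalled above, the only minimal non-abelian $2$-groups of exponent $4$ satisfying $\Omega_1\leqslant Z$ are $Q_8,H_{16}$ and $H_{32}$, which is (iii).

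The crux is (iii) $\Rightarrow$ (ii), which I would establish contrapositively: assuming some involution $t$ is non-central, I would exhibit a minimal non-abelian subgroup containing a non-central involution, which then cannot be any of $Q_8,H_{16},H_{32}$ since these satisfy $\Omega_1\leqslant Z$. Here $C_G(t)$ is proper. If some involution $s$ fails to commute with $t$, then $t,s$ are non-commuting involutions in an exponent-$4$ group, so $\la t,s\ra$ is dihedral of order $2\,o(ts)=8$, that is $\la t,s\ra\cong D_8$ --- a minimal non-abelian group with non-central involutions, giving the contradiction. Otherwise every involution commutes with $t$, and I would choose any $g\notin C_G(t)$: its order must be $4$ (an involution outside $C_G(t)$ is now excluded), so $g^2$ is an involution and hence $g^2\in C_G(t)$. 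Then $t^g\ne t$ is an involution, so it commutes with $t$, and a direct computation shows $c:=[t,g]=t\,t^g$ is an involution centralized by both $t$ and $g$. Thus $c\in Z(H)$ with $H:=\la t,g\ra$, the quotient $H/\la c\ra$ is abelian, so $|H^\prime|=|\la c\ra|=2$; by Proposition~\ref{BJlemma} the two-generator $2$-group $H$ is minimal non-abelian, while $t$ is non-central in $H$ because $[t,g]\ne 1$, contradicting (iii). The main obstacle is precisely this last case: a non-central involution need not fail to commute with its conjugates, so a copy of $D_8$ is not directly visible; the resolution is the centralizer dichotomy together with Proposition~\ref{BJlemma}, the delicate point being to choose $g$ of order $4$ with $g^2\in C_G(t)$ so that $t$ and $t^g$ commute and $[t,g]$ becomes central in $\la t,g\ra$.
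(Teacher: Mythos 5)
Your proposal is correct, and it is worth noting that the paper itself contains no proof of Proposition~\ref{EK2}: the statement is imported from \cite[Proposition~12]{EK} and \cite[Corollary~2.8]{MW}, so your argument is a genuinely self-contained alternative built only from ingredients already present in the paper. The equivalence (i)\,$\Leftrightarrow$\,(ii) is handled exactly as one would hope (among the seven groups in property~(P) only $\Z_2$, $\Z_2^2$, $\Z_4$, $\Z_2\times\Z_4$ are $2$-groups, and all are abelian), and (ii)\,$\Rightarrow$\,(iii) follows from R\'edei's classification as recalled just before the proposition. The substantial direction (iii)\,$\Rightarrow$\,(ii) is where your contribution lies, and it checks out: if $t$ is a non-central involution and some involution $s$ fails to commute with $t$, then $o(ts)=4$ (exponent $4$, and $o(ts)\leqslant 2$ would force $[t,s]=1$), so $\la t,s\ra\cong D_8$, which has non-central involutions and so is none of $Q_8$, $H_{16}$, $H_{32}$; otherwise, for $g\notin C_G(t)$ one gets $o(g)=4$ and $g^2\in C_G(t)$ automatically (as $g^2$ is an involution), whence the computation $c^g=(t\,t^g)^g=t^g\,t^{g^2}=t^g\,t=c$ is valid and shows $c=[t,g]$ is a central involution of $H=\la t,g\ra$; then $H^\prime=\la c\ra$ has order $2$, Proposition~\ref{BJlemma} makes $H$ minimal non-abelian, and $t$ is non-central in $H$, contradicting (iii) since $Q_8$, $H_{16}$, $H_{32}$ all satisfy $\Omega_1\leqslant Z$. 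As a small polish, your two cases could even be unified: when $s$ is an involution with $[t,s]\ne 1$, the element $[t,s]=(ts)^2$ is already an involution central in $\la t,s\ra$, so the same Proposition~\ref{BJlemma} argument applies with $g=s$ (indeed $D_8$ has derived subgroup of order $2$); but the dichotomy as you present it is perfectly sound.
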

\section{Preparatory lemmas}\label{sec:lemmas}

Let $\pi(G)$ denote the set of all prime divisors of the order of
a group $G$ and for a prime $p \in \pi(G)$, denote by $G_p$ a
Sylow $p$-subgroup of $G$.
For elements $x, y \in G$, the {\em commutator} $[x,y]=x^{-1}y^{-1}xy$.
In our first lemma we collect several properties
of the groups with property (P), which are
direct consequences of the definition and will be used
repeatedly in the remainder of the paper.

\begin{lem}\label{L:first}
Let $G$ be a group with property (P).
Then $G$ has the following properties:
\begin{enumerate}[{\rm (i)}]
\item For each $g \in G$, the order $o(g)=1, 2, 3, 4$ or $6$,
in particular, we have $\pi(G) \subseteq \{2,3\}$ and $G$ is a soluble group.
\item If $x, y\in G$ with $o(x)=2$ and $[x,y] \ne 1$, then
$o(y)=3$ and $xx^yx^{y^2}=1$, in particular, any two involutions in $G$ commute.
\item A Sylow $2$-subgroup $G_2$ of $G$ has exponent at most $4$ and
$\Omega_1(G_2) \leqslant Z(G_2)$.
\item A Sylow $3$-subgroup $G_3$ of $G$ has exponent $3$.
\item Each subgroup $H \leqslant G$ of even order and
each quotient group $G/N$ such that $N$ is a $3$-group have
property (P).
\end{enumerate}
\end{lem}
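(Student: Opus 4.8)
The plan is to derive all five items directly from property~(P), exploiting that $A_4$ is the unique non-abelian group on the list $\Z_2, \Z_2^2, \Z_4, \Z_6, \Z_2\times\Z_4, \Z_2\times\Z_6, A_4$, together with the elementary observation that every element of every group on this list has order in $\{1,2,3,4,6\}$. For (i), since $|G|$ is even, Cauchy's theorem yields an involution $x \in G$; given any $g \in G$, property~(P) applied to the pair $(x,g)$ forces $\la x, g\ra$ to be one of the listed groups, and as $g$ is an element of this group, $o(g) \in \{1,2,3,4,6\}$. In particular every prime dividing $|G|$ arises as the order of some element, hence lies in $\{2,3\}$, so $G$ is a $\{2,3\}$-group and therefore soluble by Burnside's $p^aq^b$ theorem.

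For (ii), suppose $o(x)=2$ and $[x,y]\ne 1$. Then $\la x,y\ra$ is non-abelian, and since $A_4$ is the only non-abelian group on the list, $\la x,y\ra\cong A_4$. If $o(y)=2$ then $x,y$ would both lie in the abelian Klein four normal subgroup $V$ of $A_4$, contradicting $[x,y]\ne 1$; as $y\ne 1$ and element orders in $A_4$ are $1,2,3$, we get $o(y)=3$. The conjugates $x, x^y, x^{y^2}$ are then the three involutions of $V$, and the product of the three non-identity elements of $V\cong\Z_2^2$ is trivial, giving $xx^yx^{y^2}=1$ (equivalently, $y$ induces a fixed-point-free automorphism of order $3$ on $V$ and Proposition~\ref{fixed-point-free}(i) applies). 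Finally, if $x,z$ are involutions with $[x,z]\ne 1$, the same argument with $y=z$ would force $o(z)=3$, a contradiction; hence any two involutions of $G$ commute.

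Parts (iii) and (iv) are quick consequences: every element of $G_2$ (resp.\ $G_3$) has order a power of $2$ (resp.\ $3$), so by (i) these orders lie in $\{1,2,4\}$ (resp.\ $\{1,3\}$), yielding exponent at most $4$ (resp.\ exponent $3$ when $G_3$ is nontrivial). For the centrality statement, let $t\in G_2$ be an involution and $g\in G_2$; if $[t,g]\ne 1$, then (ii) forces $o(g)=3$, impossible for the $2$-element $g$, so $t\in Z(G_2)$. As $\Omega_1(G_2)$ is generated by such involutions, $\Omega_1(G_2)\le Z(G_2)$.

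For (v), the subgroup claim is immediate: if $H\le G$ has even order, the pairs $(x,y)$ with $x,y\in H$ and $o(x)=2$ form a subset of those in $G$, and $\la x,y\ra$ is computed identically in $H$ and in $G$. For the quotient $G/N$ with $N$ a $3$-group, first note that $|G/N|$ is even. The crux, and the step I expect to require the most care, is verifying property~(P) downstairs, for which I would lift involutions: given $\bar x\in G/N$ with $o(\bar x)=2$, a preimage $x$ satisfies $x^2\in N$, so $o(x^2)\in\{1,3\}$ by (i) and thus $o(x)\in\{2,6\}$; replacing $x$ by $x^3$ when $o(x)=6$ produces an involution $x_0$ with $x_0N=\bar x$. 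Then for any preimage $y$ of $\bar y$, the group $H_0=\la x_0,y\ra$ is on the list by property~(P), and $\la\bar x,\bar y\ra$ is its image $H_0N/N\cong H_0/(H_0\cap N)$, a quotient of $H_0$ by a normal $3$-subgroup. A short inspection of the list finishes the argument: only $\Z_6$ and $\Z_2\times\Z_6$ contain a nontrivial normal $3$-subgroup, with quotients $\Z_2$ and $\Z_2^2$, while $A_4$ has no normal subgroup of order $3$; in every case the quotient again lies on the list.
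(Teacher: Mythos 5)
Your proof is correct and follows essentially the same route as the paper's: identify $A_4$ as the only non-abelian group on the list for (ii), deduce (iii)--(iv) from element orders and commuting involutions, and prove (v) via $\la Nx,Ny\ra \cong \la x,y\ra/(\la x,y\ra\cap N)$ together with an inspection of which listed groups have non-trivial normal $3$-subgroups. Your only real addition is the explicit lifting step in (v) --- replacing a preimage $x$ of order $6$ by the involution $x^3$ in the same coset --- which carefully justifies the paper's bare assertion that the representative may be taken to have order $2$.
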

\begin{proof}
(i): The possibilities for $o(g)$ follow directly from property (P)
and the solubility of $G$ from Burnside's $p^aq^b$-theorem.

(ii): If $x, y\in G$ with $o(x)=2$ and $[x,y] \ne 1$, then we have 
$\la x, y \ra \cong A_4$ by property (P). This implies (ii).

(iii)-(iv): These follow directly from property (P) and (ii).

(v): The first part is also clear.
For the second part, let $Nx$ be an element of $G/N$ of order $2$, 
and let $Ny$ be any other element. Note that, in this case $o(x)=2$ must hold.
Then in $G/N$ we have $\la Nx, Ny \ra \cong \la x , y \ra N/N \cong
\la x, y \ra/(\la x, y \ra \cap N)$. Furthermore, as $o(x)=2$, $\la x, y \ra$ is isomorphic to one of the groups:
$$
\Z_2,~\Z_2^2,~\Z_4,~\Z_6,~\Z_2 \times \Z_4,~
\Z_2 \times\Z_6~\text{and}~A_4.
$$
If $\la x, y\ra$ is abelian, then $\la Nx, Ny\ra$ is also isomorphic to
one of the first six groups above (here we use that $|\la Nx, Ny \ra|$
is even). Finally, if $\la x, y\ra \cong A_4$, we have that 
$\la x, y\ra \cap N$ is trivial, and $\la Nx, Ny \ra \cong A_4$.
We conclude that $G/N$ has property (P).
\end{proof}
	
We set the notation
$\Omega_1(G)=\big\la x : x \in G, \, o(x)=2 \big\ra$, 
where $G$ has even order.
Clearly, $\Omega_1(G)$ is normal in $G$.

\begin{lem}\label{L:structure}
Let $G$ be a group with property~(P).
\begin{enumerate}[{\rm (i)}]
\item $\Omega_1(G)=\Omega_1(G_2)$ is an elementary abelian $2$-group, and
\begin{equation}\label{eq:structure}
(G_2)^\prime \leqslant \Phi(G_2) \leqslant \Omega_1(G_2)
\leqslant Z(G_2).
\end{equation}
In particular, $G_2$ has nilpotency class at most $2$.
\item If $O_3(G)=1$ and $G$ has an element of order $4$, then 
$O_2(G) > \Omega_1(G_2)$.
\item If $O_2(G) > \Omega_1(G_2)$ and $G$ is not a $2$-group, then $|G_3|=3$ and the action
of $G_3$ on $O_2(G)$ by conjugation induces a regular group of automorphisms of $O_2(G)/\Omega_1(G_2)$.
\end{enumerate}
\end{lem}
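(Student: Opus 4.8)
For part~(i), I would first show that the involutions of $G$ generate an elementary abelian $2$-group. By Lemma~\ref{L:first}(ii) any two involutions commute, and the product of two commuting involutions has order dividing $2$; hence $\Omega_1(G)$ is abelian of exponent $2$. As a normal $2$-subgroup it lies in $O_2(G)$ and therefore inside every Sylow $2$-subgroup, which forces $\Omega_1(G)=\Omega_1(G_2)$. For the displayed chain I would invoke Lemma~\ref{L:first}(iii): since $\Omega_1(G_2)\leqslant Z(G_2)$ and $G_2$ has exponent at most $4$, every square $g^2$ with $g\in G_2$ has order at most $2$ and so lies in $\Omega_1(G_2)$; thus $(G_2)^2\leqslant\Omega_1(G_2)\leqslant Z(G_2)$. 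Consequently $G_2/Z(G_2)$ has exponent $2$, hence is abelian, so $G_2$ has class at most $2$ and $(G_2)'\leqslant Z(G_2)$. In a class-$2$ group $[x^2,y]=[x,y]^2$, and as $x^2$ is central this yields $[x,y]^2=1$, whence $(G_2)'\leqslant\Omega_1(G_2)$. Finally Burnside's basis theorem (Proposition~\ref{burnsidebasis}) gives $\Phi(G_2)=(G_2)'(G_2)^2\leqslant\Omega_1(G_2)$, while $(G_2)'\leqslant\Phi(G_2)$ is automatic, completing the chain.

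For part~(ii), I would use the standard fact that in a soluble group $C_G(F(G))\leqslant F(G)$, where $F(G)$ is the Fitting subgroup. Since $G$ is soluble with $\pi(G)\subseteq\{2,3\}$ (Lemma~\ref{L:first}(i)) and $O_3(G)=1$, we have $F(G)=O_2(G)$, so $C_G(O_2(G))\leqslant O_2(G)$. Assume for contradiction that $O_2(G)=\Omega_1(G_2)$ and pick $g\in G$ with $o(g)=4$. Then $g$ lies in some Sylow $2$-subgroup $P$, and by part~(i) $\Omega_1(G)=\Omega_1(P)\leqslant Z(P)$, so $g$ centralizes $\Omega_1(G)=O_2(G)$. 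Hence $g\in C_G(O_2(G))=O_2(G)=\Omega_1(G_2)$, contradicting $o(g)=4$; therefore $O_2(G)>\Omega_1(G_2)$.

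For part~(iii), write $W=O_2(G)$ and $V=\Omega_1(G_2)$, so by part~(i) the quotient $\bar W=W/V$ is elementary abelian and, by hypothesis, nontrivial; note also that $G_3\neq 1$ because $G$ is not a $2$-group and $\pi(G)\subseteq\{2,3\}$. The crucial step is to prove that every element $t\in G_3$ of order $3$ acts fixed-point-freely on $\bar W$. Suppose $C_{\bar W}(t)\neq\bar 1$. Using the coprimality of $|t|=3$ with the order of the $2$-group $W$, a fixed point of $t$ on $W/V$ can be realised by a genuine fixed point in $W$ (concretely, decomposing $V$ as a $\Z_2[\la t\ra]$-module shows $\ker(1+t+t^2)=\mathrm{im}(1-t)$ on $V$, which lets one correct the chosen preimage by an element of $V$). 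This produces $a\in W$ with $a^t=a$ and $a\notin V$, hence $o(a)=4$, so $\la a,t\ra\cong\Z_{12}$ contains an element of order $12$, contradicting Lemma~\ref{L:first}(i). Thus $C_{\bar W}(t)=\bar 1$ for every order-$3$ element $t$; since $G_3$ has exponent $3$ (Lemma~\ref{L:first}(iv)), this shows that $G_3$ acts both faithfully and regularly on $\bar W$. Then $\bar W\rtimes G_3$ is a Frobenius group with complement $G_3$, and as $|G_3|$ is odd, Proposition~\ref{Frobenius} forces $G_3$ to be cyclic; together with exponent $3$ this gives $|G_3|=3$.

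The main obstacle is the fixed-point-free step in part~(iii): lifting a fixed point of $t$ on $W/V$ to an actual $t$-invariant element of $W$. I would handle this either by quoting the standard coprime-action identity $C_{W/V}(t)=C_W(t)V/V$, or, to keep the argument self-contained, by the explicit $\Z_2[\la t\ra]$-module computation indicated above. Everything else reduces to bookkeeping with Lemma~\ref{L:first} together with Propositions~\ref{Frobenius} and~\ref{burnsidebasis} and the soluble-group fact $C_G(F(G))\leqslant F(G)$.
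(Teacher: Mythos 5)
Your proof is correct, and for parts (i) and (ii) it is essentially the paper's own argument: in (i) the paper obtains $\Phi(G_2)\leqslant\Omega_1(G_2)$ directly by noting that every square in $G_2$ has order at most $2$, so $G_2/\Omega_1(G_2)$ is elementary abelian, whereas you detour through nilpotency class $2$, the identity $[x^2,y]=[x,y]^2$ and Burnside's basis theorem --- same content, slightly more machinery; part (ii) coincides with the paper's proof ($F(G)=O_2(G)$ since $O_3(G)=1$, then $C_G(F(G))\leqslant F(G)$ from \cite[Theorem~6.1.3]{G}, and an element of order $4$ cannot lie in $\Omega_1(G_2)$). In (iii) the first half also matches: the paper quotes \cite[Theorem~5.3.15]{G} for the coprime lifting $C_{O_2(G)/\Omega_1(G)}(g)=C_{O_2(G)}(g)\Omega_1(G)/\Omega_1(G)$, which is exactly your self-contained $\ker(1+t+t^2)=\mathrm{im}(1-t)$ computation over $\Z_2[\la t\ra]$ (valid since $o(t)=3$ is coprime to $2$ and $V=\Omega_1(G_2)$ is an elementary abelian $t$-invariant central subgroup), after which the absence of elements of order $12$ (Lemma~\ref{L:first}(i)) kills lifted fixed points just as you argue. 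The genuine divergence is in excluding $|G_3|>3$: the paper picks $\la g,h\ra\cong\Z_3\times\Z_3$ inside $G_3$ (possible since $G_3$ has exponent $3$) and applies the coprime generation theorem \cite[Theorem~5.3.16]{G} to force $O_2(G)\leqslant\Omega_1(G_2)$, a contradiction; you instead observe that $G_3$ acts faithfully and regularly on $O_2(G)/\Omega_1(G_2)$, form the Frobenius group $\big(O_2(G)/\Omega_1(G_2)\big)\rtimes G_3$ as in Remark~\ref{rem:Frobenius}, and conclude from Proposition~\ref{Frobenius} that $G_3$ is cyclic, hence of order $3$ by exponent $3$. Both endings are sound; yours uses only results already recorded in Section~\ref{sec:known} and mirrors the paper's own Case~2 argument in the proof of Theorem~\ref{main}, while the paper's ending trades that for a single citation and avoids having to verify regularity of all of $G_3$ on the quotient at once.
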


\begin{proof}
(i): By Lemma~\ref{L:first}(ii), any two involutions in $G$ commute,
hence $\Omega_1(G)$ is an elementary abelian $2$-group and as
$\Omega_1(G) \trianglelefteqslant G$, we have $\Omega_1(G) \le G_2$ and so
$\Omega_1(G)=\Omega_1(G_2)$.
Also, $\Omega_1(G_2) \leqslant Z(G_2)$ by Lemma~\ref{L:first}(iii).
If $G_2$ has exponent $2$, then \eqref{eq:structure} clearly holds. If $G_2$ has exponent $4$, then, for every $x \in G_2$, we have 
$(x\Omega_1(G_2))^2=\Omega_1(G_2)$, showing that $G_2/\Omega_1(G_2)$ is an elementary abelian $2$-group.
Therefore, $(G_2)^\prime \leqslant \Phi(G_2) \leqslant \Omega_1(G_2)$ and as
$\Omega_1(G_2) \leqslant Z(G_2)$ also holds, \eqref{eq:structure} follows.
\medskip

(ii): Suppose $O_3(G)=1$ and $G$ has an
element of order $4$. By Lemma~\ref{L:first}(i),
$\pi(G) \subseteq \{2,3\}$ and $G$ is soluble. Since $O_3(G)=1$, we have the {\em Fitting subgroup} $F(G)=O_2(G)$.  By (i),
$\Omega_1(G)=\Omega_1(G_2) \leqslant O_2(G)$.  Assume 
$O_2(G)=\Omega_1(G_2)$. Then, by (i),  $G_2 \leqslant C_G(F(G)) \leqslant F(G)=\Omega_1(G_2)$ by \cite[Theorem~6.1.3]{G}.
As $G_2$ has elements of order $4$, this contradicts (i). Hence
$O_2(G) > \Omega_1(G_2)$.
\medskip

(iii): Let $g \in G_3 \setminus \{1\}$.
Then $g$ has order $3$ by Lemma~\ref{L:first}(iv), and $G_3$ normalizes $O_2(G)$.
By \cite[Theorem~5.3.15]{G},
$C_{O_2(G)/\Omega_1(G)}(g)=C_{O_2(G)}(g)\Omega_1(G)$.
As $G$ has no elements of order $12$ by Lemma~\ref{L:first}(i) we deduce that
$$
C_{O_2(G)}(g) \leqslant \Omega_1(G). 
$$
This proves (iii) if $|G_3|=3$.
If $|G_3| > 3$, then, as $G_3$ has exponent $3$, there exist $g, h \in G_3$ such
that $\langle g, h \rangle$ is elementary abelian of order $9$. But then \cite[Theorem~5.3.16]{G} implies
$$
O_2(G)=\big\langle C_{O_2(G)}(x) : x \in  \langle g, h \rangle \setminus \{1\}
\big\rangle \leqslant \Omega_1(G_2),
$$
a contradiction. Hence $|G_3|=3$.
\end{proof}

We continue with two lemmas about the normalizer of a Sylow $2$- and $3$-subgroup, respectively.

\begin{lem}\label{L:normalizer2}
Let $G$ be a group with property~(P).
Let $N=N_G(G_2)$, and let $z \in N$ be an element of order $3$. Then one of the following holds:
\begin{enumerate}[{\rm (i)}]
\item $C_{G_2}(z)=1$, and for every $u \in G_2$ of order $4$, we have that
$$
\la u, u^z \ra \cong \la u, u^z, u^{z^2} \ra \cong \Z_4^2.
$$
\item $C_{G_2}(z)=\Omega_1(G_2)$, and for every $u \in G_2$ of order $4$, we have
that
$$
\la u, u^z \ra \cong Q_8~\text{and}~\la u, u^z, u^{z^2} \ra \cong Q_8~\text{or}~Q_8 \times \Z_2.
$$
\end{enumerate}
\end{lem}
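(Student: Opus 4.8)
The plan is to first pin down $C_{G_2}(z)$ and then analyse the $z$-invariant subgroup generated by an order-$4$ element together with its $z$-conjugates. Since $z \in N_G(G_2)$, conjugation by $z$ is an automorphism of $G_2$ of order dividing $3$. I would begin by showing $C_{G_2}(z) \leqslant \Omega_1(G_2)$: if some $u \in C_{G_2}(z)$ had order $4$, then $u$ and $z$ would commute, so $uz$ would have order $12$, contradicting Lemma~\ref{L:first}(i). The heart of the dichotomy is to exclude $1 \ne C_{G_2}(z) \subsetneq \Omega_1(G_2)$. In that situation $z$ would fix some involution $r$ and move some involution $r'$ (recall $\Omega_1(G_2)$ is elementary abelian and lies in $Z(G_2)$ by Lemma~\ref{L:structure}(i), so its non-identity elements are precisely involutions). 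As $r$ is fixed by $z$ and central in $G_2$, the element $y := rz$ has order $6$; but $(r')^y = (r')^z \ne r'$ gives $[r', y] \ne 1$ with $o(r')=2$, whence Lemma~\ref{L:first}(ii) forces $o(y)=3$, a contradiction. This is where property~(P) really bites, and it yields $C_{G_2}(z) \in \{1, \Omega_1(G_2)\}$, i.e.\ exactly cases (i) and (ii).

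For case (i), $z$ acts fixed-point-freely on $G_2$, so Proposition~\ref{fixed-point-free} applies to the induced automorphism. Fix $u \in G_2$ of order $4$ and set $v=u^z$, $w=u^{z^2}$. Part~(iii) gives that $u$ commutes with $v$, $v$ with $w$, and $w$ with $u$, so $W:=\la u,v,w\ra$ is abelian; part~(i) gives $uvw=1$, so $W=\la u,v\ra$ has exponent $4$ and order at most $16$. Restricting the fixed-point-free action to the $\GF(2)$-spaces $\Omega_1(W)$ and $W/\Omega_1(W)$ (both $z$-invariant, with coprime action), each must be even-dimensional, since an order-$3$ fixed-point-free map on a $\GF(2)$-space admits no $1$-dimensional invariant subspace. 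Because $u^2 \ne 1$ lies in $\Omega_1(W)$ while $u \notin \Omega_1(W)$, both spaces have dimension $\geqslant 2$, which forces $|W|=16$ and hence $W \cong \Z_4^2$. Thus $\la u, u^z \ra \cong \la u, u^z, u^{z^2} \ra \cong \Z_4^2$.

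For case (ii), $z$ centralizes $\Omega_1(G_2)$. I would first check that $z$ acts fixed-point-freely on $\overline{G_2}:=G_2/\Omega_1(G_2)$ (elementary abelian by Lemma~\ref{L:structure}(i)): a fixed coset $x\,\Omega_1(G_2)$ means $x^z=xs$ with $s \in \Omega_1(G_2)$, and iterating, using $s^z=s$ and $z^3=1$, gives $s=1$, so $x \in C_{G_2}(z)=\Omega_1(G_2)$. Now fix $u \in G_2$ of order $4$ and put $v=u^z$, $w=u^{z^2}$. Since $z$ fixes $\Omega_1(G_2)$ pointwise, $u^2=v^2=w^2=:t$, and Proposition~\ref{fixed-point-free}(i) applied to $\overline{G_2}$ gives $\overline{u}\,\overline{v}\,\overline{w}=\overline{1}$, so $w=uvs$ for some central involution $s \in \Omega_1(G_2)$. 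The decisive computation is the square of $uv$: as $G_2$ has class $\leqslant 2$, one has $(uv)^2=u^2v^2[v,u]=[v,u]$, while on the other hand $w^2=(uvs)^2=(uv)^2$ and $w^2=t$; hence $[u,v]=t=u^2 \ne 1$. Therefore $P:=\la u,v\ra$ is a two-generator group with $|P^\prime|=2$, so minimal non-abelian by Proposition~\ref{BJlemma}, and with $u^2=v^2=[u,v]=t$ and both generators of order $4$ it can only be $Q_8$. Finally $W=\la u,v,w\ra=P\la s\ra$ is $Q_8$ if $s \in P$ and $Q_8 \times \Z_2$ if $s \notin P$, matching the two possibilities in (ii).

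The step I expect to be the main obstacle is pinning down the isomorphism type in case~(ii). A priori $P$, being minimal non-abelian of exponent $4$ with $\Omega_1(P) \leqslant Z(P)$, could be any of $Q_8$, $H_{16}$ or $H_{32}$ by Proposition~\ref{EK2}. What excludes $H_{16}$ and $H_{32}$ is precisely the constraint $u^2=v^2=w^2$ coming from $z$ fixing $\Omega_1(G_2)$ pointwise: this collapses the squares and, via the class-$2$ identity $(uv)^2=[v,u]$, forces $[u,v]=u^2$, leaving only $Q_8$. Getting the bookkeeping of $\Omega_1$, $\Phi$ and the class-$2$ commutator identities exactly right is the only delicate point; everything else reduces to the fixed-point-free machinery of Proposition~\ref{fixed-point-free} and repeated use of Lemma~\ref{L:first}.
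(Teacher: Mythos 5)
Your proof is correct, and while your opening dichotomy is essentially the paper's argument (both of you manufacture an element of order $6$ --- your $rz$, the paper's $zx$ --- that fails against a moved involution via Lemma~\ref{L:first}(ii)/property~(P)), the remainder takes a genuinely different route. The paper obtains the fixed-point-free action on $G_2/\Omega_1(G_2)$ indirectly, by forming $K=G_2\rtimes\la z\ra$, noting $O_3(K)=1$, and invoking Lemma~\ref{L:structure}(ii)--(iii); you get it by a short direct computation in case~(ii) and work in $G_2$ itself in case~(i). In case~(i), where the paper rules out $\Z_4\times\Z_2$ because it admits no fixed-point-free automorphism of order $3$, you instead use even-dimensionality of $\Omega_1(W)$ and $W/\Omega_1(W)$ as $\GF(2)$-modules under a fixed-point-free order-$3$ map --- comparable effort either way. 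The real divergence is case~(ii): the paper bounds $|T|\leqslant 16$, applies Proposition~\ref{BJlemma} and Proposition~\ref{EK2} to reduce to $T\cong Q_8$ or $H_{16}$, and then needs a \texttt{magma} computation showing $3$ does not divide $|\Aut(H_{16})|$ or $|\Aut(H_{16}\times\Z_2)|$ to kill the $H_{16}$ branch; your identity $(uu^z)^2=[u^z,u]$ (valid since $(G_2)'\leqslant\Omega_1(G_2)\leqslant Z(G_2)$), combined with $u^2=(u^z)^2=(u^{z^2})^2$, forces $[u,u^z]=u^2$ outright, so $\la u,u^z\ra$ satisfies the defining presentation of $Q_8$ and, being non-abelian, is $Q_8$. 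Thus no appeal to R\'edei's classification or to machine computation is needed there, and in fact Proposition~\ref{BJlemma} and Proposition~\ref{EK2} become superfluous at that point --- contrary to your own closing worry, the squares-collapse observation already settles the isomorphism type. What your argument buys is a fully hand-checkable, self-contained proof of the lemma; what the paper's buys is uniformity with its general toolkit, since the minimal non-abelian machinery it invokes here is reused elsewhere (e.g.\ in Section~\ref{sec:5}), at the cost of a computer verification.
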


\begin{proof}
By Lemma~\ref{L:first}(i), $G$ has no element of order $12$ and this
shows that $C_{G_2}(z) \le \Omega_1(G_2)$.
We show first that 
$$
C_{G_2}(z)=1~\text{or}~C_{G_2}(z)=\Omega_1(G_2).
$$
If not, we can choose two involutions $x, y \in G$ such that $[x,z]=1$ and
$[y,z]\ne 1$. By property~(P),
$\la y,z \ra \cong A_4$. Since $[x,y]=1$ also holds, see Lemma~\ref{L:first}(ii), we have that $\la y, zx \ra \cong A_4 \times \Z_2$, a contradiction.

In order to prove the remaining statements in (i) and (ii), we assume
that $G_2$ has an element of order $4$, say $u$.
Let $K=\la G_2, z \ra$.
Since $G_2$ is normalized by $z$, it follows that $K$ can be written as the semidirect product
$K=G_2 \rtimes \la z \ra$. It is clear that $G_2$ is the unique
Sylow $2$-subgroup of $K$.
Since $K$ has no element of order $12$, see Lemma~\ref{L:first}(i), it follows that
$O_3(K)=1$.
By Lemma~\ref{L:first}(v), $K$ also has property~(P),
hence Lemma~\ref{L:structure}(ii) and (iii) can be applied to
$K$. We conclude that the action of $z$ on $G_2$ by conjugation induces a fixed-point-free automorphism of
$G_2/\Omega_1(G_2)$, say $\alpha_z$.
For sake of simplicity, for an element $v \in G_2$, we denote by
$\overline{v}$ the image
of $v$ under the canonical epimorphism $\pi_{G_2/\Omega_1(G_2)}$
from $G_2$ to $G_2/\Omega_1(G_2)$.

Let $H=\la u, u^z, u^{z^2} \ra$.
By the above paragraph, we can apply
Proposition~\ref{fixed-point-free}(i) to $\alpha_z$ and obtain that
$\overline{u}\, (\overline{u})^{\alpha_z} \, (\overline{u})^{(\alpha_z)^2}=\overline{1}$ holds in $G_2/\Omega_1(G_2)$. This means that $u u^z u^{z^2}=x$ for some 
$x \in \Omega_1(G_2)$ and thus
$H=\la u, u^z, x \ra$. By Lemma~\ref{L:structure}(i), $x \in Z(H)$. Note that $H$ is normalized by $z$ but not centralized,
and therefore, $3$ divides the order of $\Aut(H)$.
\medskip

\noindent{\bf Case~1.} $C_{G_2}(z)=1$.\ 
In this case the action of $z$ by conjugation on $G_2$ induces a fixed-point-free automorphism of $G_2$.
By Proposition~\ref{fixed-point-free}(i), $x=1$ holds above, and so $H=\la u, u^z\ra$. Also, by
Proposition~\ref{fixed-point-free}(iii), $[u,u^z]=1$, and it follows that $H \cong \Z_4 \times \Z_2$ or
$\Z_4^2$. As the former group admits no fixed-point-free automorphism
of order $3$, we conclude that $H \cong \Z_4^2$ and (i) follows.
\medskip

\noindent{\bf Case~2.} $C_{G_2}(z)=\Omega_1(G_2)$.\ 
In this case $u^2=(u^z)^2$. If $[u,u^z]=1$, then we have 
$\la u,u^z \ra \cong \Z_4 \times \Z_2$ and so
$H \cong \Z_4 \times \Z_2^r$ for $r=1$ or $2$. In either case $z$ centralizes an
element of order $4$, a contradiction.
Thus $\la u, u^z \ra$ is non-abelian.

Let $T=\la u, u^z \ra$.
By Lemma~\ref{L:structure}(i), $[u,u^z] \in Z(T)$.
Since $u^2=(u^z)^2 \in Z(T)$ also holds, we have 
$$
|\la u^2,[u,u^z]\ra|\leqslant 4~\text{and}~|T/\la u^2,[u,u^z]\ra|\leqslant 4.
$$
It follows that $|T|\leqslant 16$. Also, $T^\prime=\la [u,u^z] \ra$, and thus by Proposition~\ref{BJlemma},
$T$ is a minimal non-abelian $2$-group. We can apply Proposition~\ref{EK2} and this shows that
$T\cong Q_8$ or $H_{16}$. Recall that $H=\la T,\, x\ra$ and $x \in Z(H)$. We have $H \cong Q_8$, $Q_8 \times \Z_2$, $H_{16}$ or $H_{16} \times \Z_2$. A computation with the computer algebra system \texttt{magma}~\cite{BCP} shows that
in the latter two cases $|\Aut(H)|$ is not divisible by $3$. Thus, $T \cong Q_8$ and $H \cong Q_8$ or
$Q_8\times \Z_2$ and (ii) follows.
\end{proof}

\begin{lem}\label{L:normalizer3}
Let $G$ be a group with property~(P)
such that $G$ is not a $2$-group and let $N=N_G(G_3)$.
\begin{enumerate}[{\rm (i)}]
\item If $|N|$ is even, then
$G_3\Omega_1(N_2)=G_3 \times \Omega_1(N_2)$.
\item If $N$ has no element of order $4$, then $N \cong G_3 \times \Z_2^n$ with $n \geqslant 0$, and if $N$ has an element of order $4$, then $N \cong \Dic(\Z_3^m \times \Z_2) \times \Z_2^n$ with $m
\geqslant 1, \, n \geqslant 0$.
\item If $N$ has no element of order $4$ and $G_3$ is abelian, then
$O_2(G)=G_2$.
\end{enumerate}
In particular, if $G_3$ is non-abelian, then
$N=G_3 \times \Omega_1(N_2)$. 
\end{lem}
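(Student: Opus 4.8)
The plan is to establish the three numbered parts in order, since the final ``in particular'' clause and part (iii) both reduce to part (ii). Two standing facts will be used throughout: first, $G_3\trianglelefteqslant N$ because $N=N_G(G_3)$; second, since $N$ has even order it inherits property~(P) by Lemma~\ref{L:first}(v), so by Lemma~\ref{L:structure}(i) the subgroup $\Omega_1(N)=\Omega_1(N_2)$ is elementary abelian, normal in $N$, and contained in $Z(N_2)$. In particular every involution of $N$ lies in $N_2$ and is a product-free generator of $\Omega_1(N_2)$.

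For part (i) the heart is to show that every involution $x\in N$ centralizes $G_3$. Suppose instead that $[x,g]\ne 1$ for some $g\in G_3$. By Lemma~\ref{L:first}(ii) this forces $o(g)=3$ and $\la x,g\ra\cong A_4$. Because $x$ normalizes $G_3$ we have $g^x\in G_3$, so $\la g,g^x\ra$ is a $3$-subgroup of $A_4$ and hence equals $\la g\ra$; thus $g^x\in\{g,g^{-1}\}$. The first option is excluded by hypothesis, and the second makes $\la x,g\ra$ dihedral of order $6$, contradicting $\la x,g\ra\cong A_4$. Therefore $\Omega_1(N_2)$ centralizes $G_3$, and as $|G_3|$ and $|\Omega_1(N_2)|$ are coprime we get $G_3\Omega_1(N_2)=G_3\times\Omega_1(N_2)$.

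For part (ii), coprimality gives a complement, so $N=G_3\rtimes N_2$. If $N$ has no element of order $4$ then $N_2=\Omega_1(N_2)\cong\Z_2^n$ and part (i) yields $N=G_3\times N_2$. The substantial case is when $N$ has an element $u$ of order $4$. The key step is to analyze the automorphism $\sigma$ of $G_3$ induced by conjugation by $u$. Since $u^2\in\Omega_1(N_2)$ centralizes $G_3$ by (i), $\sigma$ has order dividing $2$; it is nontrivial, since otherwise $ug$ would have order $12$ for $g\in G_3$ of order $3$, against Lemma~\ref{L:first}(i). The same order-$12$ obstruction shows $C_{G_3}(u)=1$, so $\sigma$ is fixed-point-free of order $2$; by Proposition~\ref{fixed-point-free}(ii), $G_3$ is abelian, hence $G_3\cong\Z_3^m$, and $g^u=g^{-1}$ for all $g\in G_3$. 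Writing $t=u^2$ and $A=G_3\times\la t\ra\cong\Z_3^m\times\Z_2$ then identifies $\la G_3,u\ra$ with $\Dic(\Z_3^m\times\Z_2)$. Applying the same argument to an arbitrary $w\in N_2$ shows that each $w$ induces on $G_3$ either the identity or the inversion, so $C_{N_2}(G_3)$ has index $2$ in $N_2$; as it can contain no element of order $4$, it equals $\Omega_1(N_2)$. Because $\Omega_1(N_2)\leqslant Z(N_2)$ has index $2$, the quotient $N_2/Z(N_2)$ is cyclic and $N_2$ is abelian, whence $N_2\cong\Z_4\times\Z_2^n$. Splitting $N_2=\la u\ra\times W$ with $W\cong\Z_2^n$ centralizing both $G_3$ and $u$, one checks the internal decomposition $N=\la G_3,u\ra\times W\cong\Dic(\Z_3^m\times\Z_2)\times\Z_2^n$.

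Parts (iii) and the final clause then follow quickly. If $N$ has no element of order $4$ and $G_3$ is abelian, part (ii) gives $N=G_3\times N_2$, which is abelian, so $G_3\leqslant Z(N)=Z(N_G(G_3))$; Burnside's normal $p$-complement theorem (Proposition~\ref{Burnside}) with $p=3$ produces a normal $3$-complement of order $|G_2|$, which must be the unique normal Sylow $2$-subgroup $G_2=O_2(G)$. Finally, if $G_3$ is non-abelian, the order-$4$ case of (ii) is impossible (it forces $G_3\cong\Z_3^m$), so $N$ has no element of order $4$ and (ii) gives $N=G_3\times\Omega_1(N_2)$. I expect the main obstacle to be the order-$4$ case of part (ii): correctly extracting the dicyclic factor and proving $N_2\cong\Z_4\times\Z_2^n$, where the fixed-point-free order-$2$ action on $G_3$, the central position of $\Omega_1(N_2)$, and the index-$2$ centralizer must be combined with care.
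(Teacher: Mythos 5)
Your proof is correct and takes essentially the same route as the paper: the no-element-of-order-$12$ obstruction yields a nontrivial fixed-point-free order-$2$ action of an order-$4$ element on $G_3$, Proposition~\ref{fixed-point-free}(ii) then gives $G_3 \cong \Z_3^m$ with inversion, and the index-$2$ centralizer $\Omega_1(N_2) \leqslant Z(N_2)$ forces $N_2 \cong \Z_4 \times \Z_2^n$, with (iii) and the final clause obtained exactly as in the paper via Proposition~\ref{Burnside} and the abelianness forced in the order-$4$ case. The only cosmetic deviations are in part (i), where the paper simply observes that $G_3$ and $\Omega_1(N_2)$ are both normal in $N$ of coprime orders (hence commute) while you argue elementwise through the $A_4$ structure, and in part (ii), where the paper gets $|N_2 : \Omega_1(N_2)| = 2$ by showing $uv \in \Omega_1(N_2)$ for any two elements $u, v$ of order $4$ rather than via the kernel of the conjugation homomorphism $N_2 \to \Aut(G_3)$.
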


\begin{proof}
(i): Assume that $|N|$ is even. By Lemma~\ref{L:first}(v), $N$ also has property~(P). Thus Lemma~\ref{L:structure}(i) can be applied to $N$, and we find that
$\Omega_1(N_2) \trianglelefteqslant N$.
Since $G_3 \trianglelefteqslant N$ also holds, it follows that
$G_3\Omega_1(N_2)=G_3 \times \Omega_1(N_2)$.
\medskip

(ii): If $N$ has no element of order $4$, then $N_2=\Omega_1(N_2)$, and by (i), we have $N=G_3\times N_2$, and
$N_2  \cong \Z_2^n$ with $n \geqslant 0$.

Suppose now that $N_2$ contains an element $u$ of order $4$. By part (i), we have $[G_3,u^2]=1$. Since
no element can have order $12$, the action of $u$ on $G_3$ by conjugation induces a fixed-point-free automorphism of $G_3$ of order $2$. By Proposition~\ref{fixed-point-free}(ii), $G_3 \cong \Z_3^m$ with $m \geqslant 1$ and $z^u=z^{-1}$ for every $z \in G_3$. Similarly, $z^v=z^{-1}$ for any $v \in N_2$ with $o(v)=4$. Thus $z^{uv}=z$, hence $uv$ has order $1$ or $2$. It follows that $uv \in \Omega_1(N_2)$, implying that $|N_2 : \Omega_1(N_2)|=2$. By Lemma~\ref{L:structure}(i),
$[u,w]=1$ for any $w\in \Omega_1(N_2)$, and it follows $N_2 \cong \Z_2^n \times \Z_4$ with $n \geqslant 0$.
In particular, $N \cong \Dic(\Z_3^m \times \Z_2) \times \Z_2^n$ with $m \geqslant 1$ and $n \geqslant 0$.
\medskip

(iii): This follows from (ii) and Proposition~\ref{Burnside}.
\end{proof}

In the next lemma, we address the question of whether or not a Sylow $p$-subgroup is normal.

\begin{lem}\label{L:ifnormal1}
Let $G$ be a group with
property~(P) and suppose that $G^\prime$ is nilpotent. Then $G$ has a normal Sylow $2$- or $3$-subgroup.
Furthermore, if $G_3$ is non-abelian, then $G$ has a normal Sylow $2$-subgroup.
\end{lem}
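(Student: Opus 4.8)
The plan is to exploit the decomposition of the nilpotent derived subgroup together with Proposition~\ref{Glemma} and the structural dichotomy for $N_G(G_3)$ furnished by Lemma~\ref{L:normalizer3}. Since $G'$ is nilpotent and $\pi(G)\subseteq\{2,3\}$ by Lemma~\ref{L:first}(i), I would first write $G'=A\times B$, where $A=O_2(G')$ and $B=O_3(G')$ are the Sylow $2$- and $3$-subgroups of $G'$; being characteristic in $G'$, both are normal in $G$. If $G$ is a $2$-group there is nothing to prove, so I may assume $G$ is not a $2$-group and hence that Lemma~\ref{L:normalizer3} applies. Writing $N=N_G(G_3)$ and taking $p=3$ in Proposition~\ref{Glemma} gives $G=O_{3'}(G')\,N=A\,N$, because $O_{3'}(G')=O_2(G')=A$. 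The argument then splits according to whether or not $N$ contains an element of order $4$, as governed by Lemma~\ref{L:normalizer3}(ii).

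Suppose first that $N$ has an element of order $4$. By Lemma~\ref{L:normalizer3}(ii), $N\cong\Dic(\Z_3^m\times\Z_2)\times\Z_2^n$, and from this structure any element $u\in N$ of order $4$ inverts $G_3\cong\Z_3^m$, i.e.\ $b^u=b^{-1}$ for every $b\in G_3$. The crux is then a one-line commutator computation: since $G_3$ has exponent $3$ by Lemma~\ref{L:first}(iv), for each $b\in G_3$ we get $[b,u]=b^{-1}b^u=b^{-2}=b$, and $[b,u]\in G'$. Hence $G_3\leqslant G'$, and as $G'$ is nilpotent this forces $G_3\leqslant O_3(G')$, so $G_3=O_3(G')\trianglelefteqslant G$. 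In this case $G$ has a normal Sylow $3$-subgroup.

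Now suppose $N$ has no element of order $4$. Then Lemma~\ref{L:normalizer3}(ii) yields the direct product $N=G_3\times\Omega_1(N_2)$, with $\Omega_1(N_2)$ elementary abelian. I would combine this with $G=AN$ by counting. The $2$-elements of $N$ are precisely those in $\Omega_1(N_2)$, so $A\cap N=A\cap\Omega_1(N_2)$; comparing $|G|=|G_2||G_3|$ with $|G|=|A||N|/|A\cap N|$ then gives $|G_2|=|A\,\Omega_1(N_2)|$, and since $A\trianglelefteqslant G$ makes $A\,\Omega_1(N_2)$ a subgroup of $G_2$, I conclude $G_2=A\,\Omega_1(N_2)$. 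To see this Sylow $2$-subgroup is normal, observe that $G$ is generated by $A$, $G_3$ and $\Omega_1(N_2)$: here $A\trianglelefteqslant G$; by Lemma~\ref{L:structure}(i) one has $\Omega_1(N_2)\leqslant\Omega_1(G_2)\leqslant Z(G_2)$, so $\Omega_1(N_2)$ centralizes $G_2$; and $G_3$ centralizes $\Omega_1(N_2)$ because $N=G_3\times\Omega_1(N_2)$ is a direct product. Thus each of the three generating subgroups normalizes $A\,\Omega_1(N_2)=G_2$, whence $G_2\trianglelefteqslant G$, completing the main statement.

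For the final assertion, note that if $G_3$ is non-abelian then the concluding clause of Lemma~\ref{L:normalizer3} gives $N=G_3\times\Omega_1(N_2)$, whose elements have order dividing $6$; in particular $N$ has no element of order $4$, so the argument of the previous paragraph applies verbatim and produces a normal Sylow $2$-subgroup. The step I expect to be the main obstacle is precisely this no-order-$4$ case: one must see that the factorisation $G=O_2(G')\,N_G(G_3)$ meshes with the direct decomposition $N_G(G_3)=G_3\times\Omega_1(N_2)$ so as to identify $G_2$ exactly as $O_2(G')\,\Omega_1(N_2)$ and then to verify its normality. A pleasant feature is that the abelian/non-abelian distinction for $G_3$ plays no role in this case, which is exactly why the main statement and the furthermore clause emerge together.
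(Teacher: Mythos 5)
Your proof is correct, and it follows the same two pillars as the paper's proof --- the factorization $G=O_{3'}(G')\,N_G(G_3)$ from Proposition~\ref{Glemma} and the dichotomy of Lemma~\ref{L:normalizer3} --- but it organizes the cases differently in a way that trims one tool. The paper splits on whether $G_3$ is abelian: for $G_3$ non-abelian it runs exactly your factorization argument ($N=G_3\times M$, so $O_2(G')M$ is a Sylow $2$-subgroup normalized by $G_3$, which centralizes $M$ and normalizes $O_2(G')$); for $G_3$ abelian with an order-$4$ element in $N$ it gets $G_3\leqslant N'\leqslant G'$ (your commutator computation $[b,u]=b^{-2}=b$ is just an explicit rendering of the same fact); and for $G_3$ abelian with no order-$4$ element it instead invokes Burnside's normal $p$-complement theorem via Lemma~\ref{L:normalizer3}(iii), since there $G_3\leqslant Z(N)$. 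You split instead on whether $N$ contains an element of order $4$, and in the no-order-$4$ case you extend the factorization argument to cover abelian $G_3$ as well, identifying $G_2=O_2(G')\,\Omega_1(N_2)$ by the product formula and checking normality against the generators $O_2(G')$, $G_3$, $\Omega_1(N_2)$. This buys a uniform treatment that avoids Proposition~\ref{Burnside} entirely and makes the furthermore clause immediate (non-abelian $G_3$ forces the no-order-$4$ case by the concluding clause of Lemma~\ref{L:normalizer3}), at the cost of the explicit counting step; the paper's Burnside route is shorter where it applies. Two cosmetic remarks: the appeal to $\Omega_1(N_2)\leqslant Z(G_2)$ is superfluous, since $\Omega_1(N_2)$ lies inside $A\,\Omega_1(N_2)$ and so normalizes it trivially; and your claim that every order-$4$ element of $N\cong\Dic(\Z_3^m\times\Z_2)\times\Z_2^n$ inverts $G_3$, while true (such elements project onto elements of the dicyclic factor outside its abelian index-$2$ subgroup), deserves the one line of justification you only gesture at.
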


\begin{proof} Clearly, we may assume that $G$ is non-nilpotent.
In particular, $G_3 \ne 1$. Let $N=N_G(G_3)$.

Assume first that $G_3$ is non-abelian.
By Lemma~\ref{L:normalizer3}, 
$N=G_3 \times M$, where $M \cong \Z_2^n$ for some $n \geqslant 0$.  Since $G^\prime$ is nilpotent, $G$ factorizes as 
$G=O_2(G^\prime)N$ (Proposition~\ref{Glemma}).
Thus $G=O_2(G^\prime)(G_3 \times M)$, and $O_2(G^\prime)M$ is a Sylow $2$-subgroup of $G$. The group
$O_2(G^\prime)$ is characteristic in $G^\prime$, which is characteristic in $G$, implying
that $O_2(G^\prime) \trianglelefteqslant G$.
We conclude that both $O_2(G^\prime)$ and $M$ are normalized
by $G_3$ and hence that $O_2(G^\prime)M$ is normal in $G$.

Assume now that $G_3$ is abelian. 
If $N$ has no element of order $4$, then we are done by
Lemma~\ref{L:normalizer3}(iii). Assume that $N$ has elements of order $4$.
Then 
$$
N \cong \Dic(\Z_3^m \times \Z_2) \times \Z_2^n
$$ 
with $m \geqslant 1, \, n \geqslant 0$ by Lemma~\ref{L:normalizer3}(ii).
We obtain that $G_3 \leqslant N^\prime \leqslant G^\prime$.
This implies that $G_3$ is a normal Sylow $3$-subgroup of
$G^\prime$ because $G^\prime$ is nilpotent, and hence $G_3$ is also normal in $G$.
\end{proof}

\begin{lem}\label{L:ifnormal2}
Let $G$ be a group with property~(P) such that $G$ is not a $2$-group,
$G_2$ is of exponent $4$ and $G_3 \ntrianglelefteqslant G$.
Then
$$
|G_3|=3~\text{and}~|G_2 : O_2(G)| \leqslant 2.
$$
Furthermore, if $|G_2 : O_2(G)|=2$, then $G/O_2(G) \cong D_6$.
\end{lem}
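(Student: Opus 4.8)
The plan is to analyze the structure of $G$ under the hypothesis that the Sylow $3$-subgroup $G_3$ is not normal, exploiting the property (P) together with the structural results already established. First I would note that since $G_2$ has exponent $4$, by Lemma~\ref{L:structure}(i) we have $(G_2)^\prime \leqslant \Phi(G_2) \leqslant \Omega_1(G_2) \leqslant Z(G_2)$, and $G_2$ has nilpotency class at most $2$. The key observation is that $G_3 \ntrianglelefteqslant G$ forces certain constraints via the counting of Sylow subgroups and the $A_4$-structure imposed by property (P). To obtain $|G_3|=3$, I would argue by contradiction: if $|G_3| > 3$, then (using that $G_3$ has exponent $3$ by Lemma~\ref{L:first}(iv)) one can find a subgroup $\la g, h \ra$ elementary abelian of order $9$, and combining this with the action on $O_2(G)$ as in the proof of Lemma~\ref{L:structure}(iii) leads to a contradiction or forces $G_3$ to be normal.

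For the bound $|G_2 : O_2(G)| \leqslant 2$, I would pass to the quotient $\overline{G} = G/O_2(G)$, which has property (P) by Lemma~\ref{L:first}(v) since $O_2(G)$ is a $2$-group --- wait, more carefully, Lemma~\ref{L:first}(v) gives property (P) for quotients by $3$-groups, so instead I would work directly with the solvable structure. The Fitting-type analysis is central: since $G$ is soluble with $\pi(G) \subseteq \{2,3\}$, I would examine $F(G)$ and the factor group modulo $O_2(G)$, using that $G_3$ acts on $O_2(G)$ with the fixed-point structure controlled by Lemma~\ref{L:structure}(iii). The nonnormality of $G_3$ means $N_G(G_3) < G$, and I would invoke Sylow's theorem: the number $n_3$ of Sylow $3$-subgroups is $\equiv 1 \pmod 3$ and divides $|G_2|$, so $n_3 \in \{4, 16, \dots\}$ being a power of $2$ congruent to $1$ mod $3$. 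The structure of $G/O_2(G)$ as a group in which a Sylow $3$-subgroup of order $3$ is self-normalizing (modulo the $2$-part) should pin down $|G_2 : O_2(G)|$.

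For the final claim, if $|G_2 : O_2(G)| = 2$, I would show $G/O_2(G) \cong D_6$. Here $\overline{G} = G/O_2(G)$ has a Sylow $3$-subgroup $\overline{G_3}$ of order $3$ that is not normal, and its Sylow $2$-subgroup has order $2$, so $|\overline{G}|$ has the form $2^a \cdot 3$ but with $|\overline{G_2}| = 2$; thus $|\overline{G}| = 6$, and a group of order $6$ with non-normal Sylow $3$-subgroup is precisely $D_6 \cong S_3$. The delicate point is verifying that $O_2(G)$ is exactly the $2$-core needed so that the quotient has $2$-part of order $2$ and not larger --- this requires showing that all of $O_2(G)$ is genuinely absorbed and that the residual action produces a dihedral rather than cyclic extension, which follows because an involution in $\overline{G}$ acting on $\overline{G_3} \cong \Z_3$ without being centralized (as centralizing would yield order $6$ elements and normality) must invert it.

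The hard part will be establishing $|G_2 : O_2(G)| \leqslant 2$ rigorously, since this requires carefully combining the fixed-point-free action of $G_3$ on $O_2(G)/\Omega_1(G_2)$ (Lemma~\ref{L:structure}(iii)) with the global Sylow-counting constraints, and ruling out configurations where a larger $2$-part survives in the quotient. In particular, one must show that any element of $G_2$ lying outside $O_2(G)$ cannot be centralized by the order-$3$ element in a way compatible with property (P), and that at most one such coset can exist. I expect this to hinge on the fact that property (P) forbids elements of order $12$ and restricts $\la x, y \ra$ for involutions $x$, so that the image of $G_3$ in $\mathrm{Out}$ of the relevant section acts with the required regularity, leaving only the $D_6$ possibility when the index is exactly $2$.
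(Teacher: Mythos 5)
There is a genuine gap: you never confront the possibility $O_3(G) \neq 1$, and this is where essentially all of the paper's work lies. Your route to $|G_3|=3$ --- produce $\la g,h \ra \cong \Z_3^2$ and rerun the argument of Lemma~\ref{L:structure}(iii) --- needs that lemma's hypothesis $O_2(G) > \Omega_1(G_2)$, which is supplied by Lemma~\ref{L:structure}(ii) only under the assumption $O_3(G)=1$. If $O_3(G) \neq 1$, the generation theorem used there merely gives $O_2(G) \leqslant \Omega_1(G_2)$, which is no contradiction (and is in fact what happens in that configuration), so your sketch cannot ``force $G_3$ normal'' from it. The paper kills this case by a separate argument you have no analogue of: applying Lemma~\ref{L:normalizer3}(ii) inside $O_3(G)G_2$ to get $G_2 \cong \Z_2^n \times \Z_4$ and $O_3(G) \cong \Z_3^m$ (so $|G_3| \ne 3$ and $O_2(G)=\Omega_1(G_2)$ of index $2$ in $G_2$); passing to $H=G/O_3(G)$ --- where property~(P) \emph{does} descend, by Lemma~\ref{L:first}(v) --- to conclude $G_2O_3(G) \trianglelefteqslant G$; deducing $G/F(G) \cong \Z_6$, hence $G'$ abelian; and then invoking Lemma~\ref{L:ifnormal1} to produce a normal Sylow subgroup, a contradiction. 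Relatedly, your Sylow-counting remark ($n_3$ a power of $2$ congruent to $1 \bmod 3$) is true but leads nowhere; the actual mechanism for $|G_2 : O_2(G)| \leqslant 2$ is that in $K=G/O_2(G)$ one has $O_2(K)=1$, so the Fitting subgroup $F(K)=O_3(K)$ is self-centralizing, and since $|G_3|=3$ this forces $L=G_3O_2(G)/O_2(G)=O_3(K) \trianglelefteqslant K$ with $K/L$ embedding in $\Aut(\Z_3) \cong \Z_2$, whence $|K| \leqslant 6$.

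There is also a concrete error in your treatment of the final claim: you assert that $\overline{G_3}$ is \emph{not} normal in $\overline{G}=G/O_2(G)$ and that ``a group of order $6$ with non-normal Sylow $3$-subgroup is precisely $D_6$.'' No group of order $6$ has a non-normal Sylow $3$-subgroup (it has index $2$), and in the paper's proof the image of $G_3$ \emph{is} normal in the quotient --- non-normality of $G_3$ in $G$ does not pass to $G/O_2(G)$. The case $K \cong \Z_6$ is excluded not by any Sylow non-normality but because $\Z_6$ has a non-trivial normal $2$-subgroup, whose preimage would be a normal $2$-subgroup of $G$ strictly containing $O_2(G)$ (equivalently, $C_K(L)=L$ fails); your alternative justification, that centralizing ``would yield order $6$ elements and normality,'' does not work as stated, since elements of order $6$ are perfectly compatible with property~(P). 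Your instinct to be cautious about Lemma~\ref{L:first}(v) for the quotient by $O_2(G)$ was correct, but the replacement argument must be the self-centralizing Fitting subgroup analysis above, not a property-(P) or Sylow-counting argument.
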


\begin{proof}
Assume for the moment that $O_3(G) \ne 1$.
We are going to use below the facts that both $O_3(G)G_2$ and $G/O_3(G)$ have property~(P), see Lemma~\ref{L:first}(v).

Since $O_3(G)$ is normal in $O_3(G)G_2$ and $G_2$ is
of exponent $4$, we obtain by Lemma~\ref{L:normalizer3}(ii) that
$G_2 \cong \Z_2^n \times \Z_4$ for some $n \geqslant 0$, and that
$O_3(G) \cong \Z_3^m$ for some $m \geqslant 1$.
Since $G_3 \ntrianglelefteqslant G$, it follows that $|G_3| \ne 3$.
This, combined together with Lemma~\ref{L:structure}(iii), yields that
$$
O_2(G)=\Omega_1(G_2) \cong \Z_2^{n+1},
$$ 
and so $|G_2 :O_2(G)|=2$.
Note that, $G_p \ntrianglelefteqslant G$ for both $p=2$ and $p=3$.

Write $H=G/O_3(G)$. Then $O_3(H)=1$ and
$H_2 \cong G_2  \cong \Z_2^n \times \Z_4$.
Consequently, $|H_2 : \Omega_1(H_2)|=2$.
By Lemma~\ref{L:structure}(ii), $O_2(H) \ne \Omega_1(H_2)$, implying that $O_2(H)=H_2$, and so $G_2O_3(G) \trianglelefteqslant G$.
Also, by Lemma~\ref{L:structure}(iii), $|H_3|=3$.

The Fitting subgroup 
$$
F(G)=O_2(G) \times O_3(G)
$$ 
is abelian. The order $|G/F(G)|=6$ because $|G_p: O_p(G)|=p$ for both $p=2$ and $p=3$,
hence $G/F(G) \cong \Z_6$ or $D_6$. The latter case is impossible
because $G_2F(G)/F(G)=G_2O_3(G)F(G)/F(G)$ is a normal subgroup of $G/F(G)$ of order $2$.
We conclude in turn that, $G/F(G) \cong \Z_6$, $G^\prime \leqslant F(G)$ and hence
$G^\prime$ is abelian. By Lemma~\ref{L:ifnormal1},
$G_2 \trianglelefteqslant G$ or $G_3 \trianglelefteqslant G$, a contradiction.

Summing up, we have proved that $O_3(G)=1$. As $G_2$ is of exponent $4$, by Lemma~\ref{L:structure}(ii), $O_2(G) > \Omega_1(G_2)$,
and then by Lemma~\ref{L:structure}(iii), $|G_3|=3$.

Assume that $G_2 \trianglelefteqslant G$. Then $G_2=O_2(G)$ and the lemma follows.

Now, assume that $G_2 \ntrianglelefteqslant G$.
Let us consider the groups $K:=G/O_2(G)$ and $L:=G_3O_2(G)/O_2(G)$. Then
$O_2(K)=1$ and $|L|=3$, implying in turn that
$L \trianglelefteqslant K$, $C_K(L)=L,$ and $K/L$ is isomorphic to a
subgroup of $\Aut(L)\cong \Z_2$. Since $G_2 \ntrianglelefteqslant G$, we conclude that
$|K|=2|L|$ and $K  \cong D_6$. This completes the proof of the lemma.
\end{proof}
\section{Proof of Theorem~\ref{main}}\label{sec:proof}

Let $K \rtimes H$ be a Frobenius group with Frobenius kernel
$K \cong \Z_2^n$ and a complement $H \cong \Z_3$.
Let $H=\la z \ra$, and let $x \in K$ be any
non-identity element. The action of $z$ on $K$ by conjugation induces a fixed-point-free automorphism. Then Proposition~\ref{fixed-point-free}(i) implies that
$\la x, z \ra \cong A_4$. This fact will
be used several times in the rest of the section.

\subsection{Sufficiency}
Let $G$ be a group from one of the 
families (a)--(d) in Theorem~\ref{main}.
We have to show that $G$ has property~(P).

It is straightforward to check that for any $g \in G$,
$o(g)=1, 2, 3, 4$ or $6$.
Let $x, y \in G$ be arbitrary elements such that $o(x)=2$.
In the case when $[x,y]=1$, we find
$$
\la x, y \ra \cong \Z_2,~\Z_2^2,~\Z_4,~\Z_6,~\Z_2 \times \Z_4~\text{or}~\Z_2 \times \Z_6.
$$
Thus $G$ has property~(P) if and only if the
following condition holds:
\begin{equation}\label{eq:suffi}
[x,y] \ne 1 \implies \la x, y \ra \cong A_4.
\end{equation}

If $G$ is from family (a), then one can check easily that
$x \in Z(G)$, in particular,
the above implication holds trivially.

Suppose that $G=(U \times V) \rtimes \la z \ra$ given in family (b).
Then we have $x \in V$ and $y=uvz^i$, where $u\in U,\, v\in V$ and
$0 \leqslant  i \leqslant 2$. g Writing $t=u z^i$, we have that $y=vt$ and
$y^3=v v^{t^2} v^t=v v^t v^{t^2}=v v^{z^i} v^{z^{2i}}$.
It follows from the description of the groups in family (b) that $z$ acts on
$V$ by conjugation as a fixed-point-free automorphism of order $3$, and so
$y^3=1$ if $i > 0$ by Proposition~\ref{fixed-point-free}(i). If $[x,y] \ne 1$, then $i > 0$, $y$ normalizes $V$, and its action on $V$ by conjugation induces a fixed-point-free automorphism of order $3$. By the paragraph preceding Subsection~4.1,
$\la x, y \ra \cong A_4$.

Suppose next that $G=K \rtimes H$ given in family (c).
Then we have $x \in \Omega_1(K)$ and $y=kh$, where $k\in K$ and $h\in H$.
If $h=1$, then we have $[x,y]=1$ because $\Omega_1(K) \leqslant Z(K)$. If
$h \ne 1$, then $y$ has order $3$ and $\la x,y \ra \cong A_4$
because $y$ normalizes
$\Omega_1(K)$ and its action on $\Omega_1(K)$ by conjugation induces a fixed-point-free automorphism of order $3$. Thus the paragraph preceding Subsection~4.1 can be applied again and this shows that \eqref{eq:suffi} holds also in this case.

Finally, suppose that $G=(U \rtimes \la z \ra) \times V$ given in family (d). It follows from the properties of groups in family (d) that
$\Omega_1(G)=\Omega_1(U) \times V=Z(G)$. This yields \eqref{eq:suffi} immediately.

\subsection{Necessity}
Let $G$ be a group having property~(P) such that $G$ is non-nilpotent.
Note that, $\Omega_1(G_2) \leqslant Z(G_2)$ holds by Lemma~\eqref{L:first}(iii).
We are going to show below 
that $G$ belongs to one of the families (a)--(d) in Theorem~\ref{main}.
\medskip

\noindent{\bf Case 1.} $G_3 \trianglelefteqslant G$.\ 
We apply Lemma~\ref{L:normalizer3} to $G$. If $G_2$ is an elementary abelian
$2$-group, then $G_2=\Omega_1(G_2)$, hence $G=G_3 \times G_2$, see Lemma~\ref{L:normalizer3}(i),
contradicting that $G$ is non-nilpotent.
Therefore, $G$ is of exponent $4$, and Lemma~\ref{L:normalizer3}(ii) shows that
it belongs to family (a) in Theorem~\ref{main}.
\medskip

\noindent{\bf Case 2.} $G_3 \ntrianglelefteqslant G$,
$G_2 \trianglelefteqslant G$
and $G_2$ is an elementary abelian $2$-group.\ Let 
$$
U=C_{G_3}(G_2)~\text{and}~V=G_2.
$$
Lemma~\ref{L:normalizer2} implies that for every $z \in G_3$ such that $z \notin U$, the action of $z$
on $V$ by conjugation induces a fixed-point-free automorphism,
and such a $z$ exists because $G$ is non-nilpotent. Consequently,
the group $G_3/U$ is isomorphic to a regular group $H$ of automorphism of $V$, and
the semidirect product $V\rtimes H$ is a Frobenius group with kernel $V$ and a complement $H$.
By Proposition~\ref{Frobenius}, $|H|=3$. Therefore, $|G_3 : U|=3$, and if $z \in G_3$ such that $z \notin U$, 
we can write $G_3=U \rtimes \la z \ra$ and so $G=(U \times V) \rtimes \la z \ra$.

Let us consider $V$ as a $\la z \ra$-module over $\GF(2)$.
Since $G$ is non-nilpotent, by Lemma~\ref{L:normalizer2} we have $C_V(z)=1$. Thus $V$ can be decomposed into the direct sum $V=V_1 \oplus \cdots \oplus V_n$ of
irreducible $\la z \ra$-modules of dimension $2$ each, and for every
$i,\, 1 \leqslant i \leqslant n$,
there exist elements $a_i, b_i \in V_i$ such that $V_i=\la a_i,b_i \ra$, $a_i^z=b_i$ and
$b_i^z=a_ib_i$. We obtain that $G$  belongs to family (b) of Theorem~\ref{main}.
\medskip

\noindent{\bf Case 3.} $G_3 \ntrianglelefteqslant G$,
$G_2 \trianglelefteqslant G$
and $G_2$ is of exponent $4$.\ By Lemma~\ref{L:ifnormal2}, $|G_3|=3$. By Lemma~\ref{L:normalizer2},
$C_{G_2}(G_3)=1$ or
$C_{G_2}(G_3)=\Omega_1(G_2)$.
In the former case we obtain that $G=G_2 \rtimes G_3$ is a Frobenius
group as described in family (c) in Theorem~\ref{main}.
\smallskip

It remains to consider the case when
$C_{G_2}(G_3)=\Omega_1(G_2)$.
Let $z$ be a generator of $G_3$.
By Lemma~\eqref{L:structure}(i), $\Omega_1(G_2) \leqslant Z(G_2)$. On the other hand, if $u \in G_2$ is an element of order $4$, then $[u^z,u]\ne 1$, see Lemma~\ref{L:normalizer2}(ii),
and we conclude that $\Omega_1(G_2)=Z(G_2)$.
Let $x \in (G_2)^2, \, x \ne 1$. Then $x=u^2$ for some $u \in G_2$ and Lemma~\ref{L:normalizer2}(ii)
implies that $x=[u,u^z]$. We find in turn that, $(G_2)^2 \leqslant (G_2)^\prime$ and $\Phi(G_2)=(G_2)^\prime (G_2)^2=(G_2)^\prime$. Then \eqref{eq:structure} reduces to
$$
(G_2)^\prime=\Phi(G_2) \leqslant Z(G_2)=\Omega_1(G_2).
$$
Let $V$ be a complement of $\Phi(G_2)$ in $Z(G_2)$.
Then, by \cite[Theorem~5.2.3]{G},
\begin{eqnarray*}
G_2/\Phi(G_2) &=& C_{G_2/\Phi(G_2)}(G_3) \times [G_2,G_3] \Phi(G_2)/\Phi(G_2) \\
&=&  V\Phi(G_2)/\Phi(G_2) \times [G_2,G_3] \Phi(G_2)/\Phi(G_2).
\end{eqnarray*}
Hence $U=[G_2,G_3]\Phi(G_2)$ is a complement of $V$ and also $G_3$-invariant, i.e.
normalized by $z$.
Therefore,
$$
G=(U \times V) \rtimes \la z \ra=(U \rtimes \la z \ra) \times V,
$$
where $V \cong \Z_2^m, \, m \geqslant 0$.
Since $G_2=U \times V$ and $V \leqslant \Omega_1(G_2)=Z(G_2)$, we have 
$U^\prime=G^\prime=\Phi(G_2)=\Phi(U)$. Also,
$Z(G_2)=Z(U) \times V$, by which 
$$
|Z(U)|=|Z(G_2) : V|=|\Phi(G_2)|,
$$
and as $Z(U) \geqslant \Phi(G_2)$, we conclude that $U^\prime=\Phi(U)=Z(U)$ and
$U$ is a non-abelian special $2$-group.

Let $K=U/Z(U)=U/\Phi(U)$, $L=\la U, z \ra=U \rtimes \la z \ra$. For $x \in L$, let
$\overline{x}$ denote the image of
$x$ under the canonical epimorphism
$\pi_{L/Z(U)}$ from $L$ to $L/Z(U)$.

By Lemma~\ref{L:structure}(i),
$\Phi(U) \leqslant \Omega_1(U) \leqslant Z(U)$.
On the other hand, $\Phi(U)=Z(U)$ also holds, hence
$\Omega_1(U)=Z(U)$. Then by Lemma~\ref{L:structure}(iii),
we have $C_K(\overline{z})=1$.
Again, considering $K$ as a $\la \overline{z} \ra$-module over
$\GF(2)$, we obtain that
$K$ decomposes into the direct sum $K=K_1 \oplus \cdots \oplus K_n$ of irreducible $\la \overline{z} \ra$-modules of dimension $2$ each.

Next, we construct $n$ subgroups $U_1, \ldots, U_n \leqslant U$.
Fix $1 \leqslant i  \leqslant n$. Choose an element $u_i$ such that
$\overline{u}_i \in K_i$ and $\overline{u}_i \ne \overline{1}$.
Then $o(u_i)=4$ and by Lemma~\ref{L:normalizer2}(ii), $\la u_i, u_i^z, u_i^{z^2} \ra \cong Q_8$ or $Q_8 \rtimes \Z_2$.
A computation with the computer algebra system \texttt{magma}~\cite{BCP} shows that the group
$Q_8 \times \Z_2$ contains four subgroups isomorphic to $Q_8$. Consequently, we can choose a
subgroup $U_i \leqslant \la u_i, u_i^z, u_i^{z^2} \ra$ such that $U_i \cong Q_8$ and $U_i^z=U_i$. This implies that $\la U_i, z \ra \cong \SL(2,3)$.
It is clear that $\pi_{L/Z(U)}(U_i)=K_i$.

We claim that the above subgroups $U_1, \ldots, U_n$ satisfy all the conditions given by family (d) of Theorem~\ref{main}, and therefore,
$G$ belongs to this family (recall that
$G=(U \rtimes \la z \ra) \times V$).

Let $1 \leqslant i, j \leqslant n$ such that $i \ne j$.
Then 
$$
U_iZ(U)/Z(U)=K_i~\text{and}~U_jZ(U)/Z(U)=K_j.
$$
Since $K_i \trianglelefteqslant K=U/Z(U)$, we have $U_iZ(U) \trianglelefteqslant U$.
Also, $K=K_1 \oplus \cdots \oplus K_n$, implying that
$U=Z(U)\prod_{i=1}^nU_i$. Finally, since $K_i \cap K_j=1$, we
obtain that $U_i \cap U_j \leqslant Z(U)$ and as $Z(U)$ is an
elementary abelian $2$-group, it follows that $|U_i \cap U_j| \leqslant 2$.
This completes the proof of Case~3.
\medskip
	
\noindent{\bf Case 4.} $G_p \ntrianglelefteqslant G$ for both $p=2$ and $3$.\ 
In fact, we are going to prove below that this case does not occur.

For sake of simplicity, we set $U=O_2(G)$.
Note that, $G_2$ is not an elementary abelian $2$-group. For
otherwise,
$G_2=\Omega_1(G_2)$, in particular, $G_2 \trianglelefteqslant G$
by Lemma~\ref{L:structure}(i), which is impossible.
Then, by Lemma~\ref{L:ifnormal2}, $|G_3|=3$, $|G_2 :U|=2$ and
$G/U \cong D_6$.

Let $z$ be a generator of $G_3$ and
$N=N_G(G_3)$.  According to Lemma~\ref{L:normalizer3} one of
the following two possibilities holds:
$N=G_3 \times N_2$ and $N_2 \cong \Z_2^n, \, n \geqslant 0$, or
$N \cong \Dic(\Z_6) \times \Z_2^m, \, m\geqslant 0$. In the former  case, $G_3 \leqslant Z(N)$, so $G$ has a normal $3$-complement by
Proposition~\ref{Burnside}. This, however, contradicts the assumption that
$G_2 \ntrianglelefteqslant G$. Therefore, there is an element
$w\in N$ of order $4$ such that $\la z, w\ra \cong \Dic(\Z_6)$.
Note that, we have $z^w=z^{-1}$.

Then $w \notin U$, otherwise $[z,w] \in U \cap G_3=1$,
a contradiction. On the other hand, $w^2 \in U$ because of $|G_2 : U|=2$.
It also follows that the group
$P:=U\la w \ra$ is a Sylow $2$-subgroup of $G$.

Also, the group
$H$ has property~(P) and hence Lemma~\ref{L:normalizer2} can be applied to $H$.
Since $[w^2,z]=1$, it follows by that lemma that
$C_U(z)=\Omega_1(U)$.

As $P$ is a Sylow $2$-subgroup of $G$, it follows from Lemma~\eqref{eq:structure}(i) that 
$$
P^\prime \leqslant \Omega_1(P) \leqslant Z(P). 
$$
In particular, the following
property holds:
\begin{equation}\label{eq:uw}
u^w=u[u,w]~\text{with}~
[u,w] \in  \Omega_1(P)  \leqslant Z(P)~\text{for all}~u \in U.
\end{equation}

Observe next that $U$ cannot be an elementary abelian $2$-group.
Otherwise, we have $U=\Omega_1(U)$, hence it is centralized by
$G_3$, recall that $C_U(z)=\Omega_1(U)$.
On the other hand, $G_3^w=G_3$ and
$G=\la U, w, G_3 \ra$ also holds, and all these would yield that
$G_3 \trianglelefteqslant G$, a contradiction.

Finally, we may choose an element $u \in U$ of order $4$.
Set $v=u^z$. Consider the group $\la U, z \ra$. Then
$z \in N_{\la U, z\ra}(U)$, and it has been shown in the proof of
Lemma~\ref{L:normalizer2} that $uvv^z=uu^zu^{z^2}=x$ for some $x \in \Omega_1(U)$ (see the paragraph before Case~1).
It follows 
$$
v^z=(uv)^{-1}x=uvx_1, 
$$
where $x_1=(uv)^{-2}x \in \Omega_1(U) \leqslant \Omega_1(P)$.

By Eq.~\eqref{eq:uw}, 
$$
u^w=uy_1~\text{and}~(uvx_1)^w=uvx_1y_2
$$ 
for $y_1, y_2 \in \Omega_1(P)  \leqslant Z(P)$.
Then, since $z^w=z^{-1}$, we can write
$$
(uv x_1 y_2)^{z^{-1}}=(uv x_1 y_2)^{z^w}=
(uv x_1)^{zw}=v^{z^2w}=u^{z^3w}=uy_1.
$$
Thus, $vy_1^z=(uy_1)^z=uv x_1 y_2$, and so
$$
y_1^z=u^v x_1 y_2.
$$
The elements $u^v, x_1 , y_2 \in P$,
$o(u^v)=4$, and $x_1, y_2 \in \Omega_1(P) \leqslant Z(P)$.
We obtain that the order of the product in the right side above is equal to $4$, however, $y_1^z$ is of order at most $2$, a contradiction.
This completes the proof of Case~4 and also Theorem~\ref{main}.
\section{On groups in families (c) and (d)}\label{sec:5}

\begin{lem}\label{L:noQ8}
Let $G=K \rtimes H$ be a Frobenius group from family (c) in Theorem~\ref{main}. Then $K$ contains no subgroup isomorphic to $Q_8$.
\end{lem}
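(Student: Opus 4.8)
The plan is to assume that $K$ contains a copy of $Q_8$ and derive a contradiction by playing the fixed-point-free action of the complement against the fact that $K$ has class $2$ and exponent $4$ with $\Omega_1(K)\leqslant Z(K)$. First I would fix a generator $z$ of $H\cong\Z_3$; by the Frobenius hypothesis, conjugation by $z$ is a fixed-point-free automorphism of $K$ of order $3$, so $C_K(z)=1$ and Proposition~\ref{fixed-point-free}(i),(iii) give, for every $x\in K$,
\[
x\,x^z\,x^{z^2}=1 \qquad\text{and}\qquad [x,x^z]=1 .
\]
Since $K$ is of class $2$ and exponent $4$ with $\Omega_1(K)\leqslant Z(K)$, every square $x^2$ and every commutator $[x,y]$ lies in the central elementary abelian subgroup $\Omega_1(K)$, the commutator is multiplicative in each variable (so $[xy,w]=[x,w][y,w]$), and $(xy)^2=x^2y^2[x,y]$. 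Assuming for contradiction that $Q=\la a,b\ra\cong Q_8$ sits inside $K$, I would record that $t:=a^2=b^2=[a,b]$ is the unique involution of $Q$, whence $t\in\Omega_1(K)\leqslant Z(K)$ and $t\neq 1$.

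The heart of the argument is to pin down the ``cross'' commutator $[a,b^z]$. Applying $[x,x^z]=1$ to $x=ab$ and expanding $[ab,a^zb^z]$ by multiplicativity, the diagonal terms $[a,a^z]=[b,b^z]=1$ drop out and I obtain the symmetry relation $[a,b^z]=[a^z,b]$. Applying this relation a second time, now to $[a^z,b]=[a^z,(b^{z^2})^z]$, and using that conjugation by $z$ is an automorphism (so $[a^{z^2},b^{z^2}]=[a,b]^{z^2}$), I arrive at
\[
[a,b^z]=[a^z,b]=[a^{z^2},b^{z^2}]=t^{z^2}.
\]

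With this in hand the contradiction is immediate. Using $(xy)^2=x^2y^2[x,y]$ and the above value,
\[
(ab^z)^2=a^2\,(b^z)^2\,[a,b^z]=t\,t^z\,t^{z^2}=1,
\]
the last equality by Proposition~\ref{fixed-point-free}(i) applied to the involution $t$. Hence $ab^z$ has order at most $2$, so $ab^z\in\Omega_1(K)\leqslant Z(K)$, and therefore
\[
[a,b]=\big[(ab^z)(b^z)^{-1},\,b\big]=\big[(b^z)^{-1},b\big]=[b^z,b]^{-1}=1,
\]
where the penultimate step uses $ab^z\in Z(K)$ and the last uses $[b,b^z]=1$. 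This contradicts $[a,b]=t\neq1$, completing the proof.

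I expect the main obstacle to be the middle step: extracting from the fixed-point-free order-$3$ action enough to determine $[a,b^z]$, rather than merely the ``self-commuting'' relations $[x,x^z]=1$. The crucial leverage is the symmetry $[a,b^z]=[a^z,b]$ coming from $[ab,(ab)^z]=1$; once it is combined with the $z$-equivariance of commutators, the product $t\,t^z\,t^{z^2}$ materialises and collapses to $1$, which is precisely the feature incompatible with a group having a single involution such as $Q_8$. (Conceptually, one may read the whole computation as saying that, modulo $\Omega_1(K)$, the commutator form on $K/\Omega_1(K)$ is $\mathrm{GF}(4)$-sesquilinear with respect to the action of $z$, forcing $a b^z$ into $\Omega_1(K)$; but the elementary commutator identities above suffice.)
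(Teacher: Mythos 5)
Your proposal is correct, and it reaches the contradiction by a genuinely different (and shorter) route than the paper's. All steps check: since $K$ has exponent $4$, every square lies in $\Omega_1(K)$, so $K/\Omega_1(K)$ is elementary abelian and $K'\leqslant\Omega_1(K)\leqslant Z(K)$; hence commutators are central involutions, commutation is bilinear, and $(xy)^2=x^2y^2[x,y]$. Your polarization identity $[x,y^z]=[x^z,y]$ is valid for \emph{all} $x,y\in K$, because Proposition~\ref{fixed-point-free}(iii) gives $[w,w^z]=1$ for every $w$, in particular for $w=xy$; applying it twice and using $z^3=1$ together with $z$-equivariance of commutators correctly yields $[a,b^z]=[a,b]^{z^2}=t^{z^2}$, after which $(ab^z)^2=t\,t^z\,t^{z^2}=1$ by Proposition~\ref{fixed-point-free}(i), and the final centrality computation kills $[a,b]$. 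The paper argues differently: writing $c=a^z$, $d=b^z$, $u=[a,d]$, it computes $u^z$ and $u^{z^2}$ from $c^z=(ac)^{-1}$ and $d^z=(bd)^{-1}$, obtaining $u=[b,c]$, $u^z=uc^2$, $u^{z^2}=ua^2$, and then splits into cases: if $u\notin\la a^2,c^2\ra$, then $L'=\la a^2,c^2,u\ra\cong\Z_2^3$ would be $z$-invariant and admit a fixed-point-free automorphism of order $3$, which is impossible; otherwise it eliminates $u=a^2$ and $u=c^2$, and in the surviving case $u=a^2c^2$ it shows $(abc)^2=1$, so $abc\in Z(K)$ and $[a,b]=1$. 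Your twisted-symmetry identity pins down the cross commutator exactly --- indeed $t^{z^2}=(a^{z^2})^2=\big((ac)^{-1}\big)^2=a^2c^2$, precisely the paper's one surviving value --- so your argument renders the paper's first case vacuous and dispenses with both the case distinction and the $\Z_2^3$ dimension count; the closing step is then parallel (your central element is $ab^z=ad$, the paper's is $aba^z=abc$). What the paper's version buys is an explicit grip on the $z$-invariant subgroup $L=\la a,b,c,d\ra$ and its commutator relations, which reappear verbatim (e.g.\ $[a,d]=[b,c]=a^2c^2$) in the order-$768$ example of Section~\ref{sec:5}; what yours buys is brevity and the conceptual point that the commutator form on $K/\Omega_1(K)$ is $z$-twisted symmetric, so the unique involution $t$ of a $Q_8$ inside $K$ would have trivial ``norm'' $t\,t^z\,t^{z^2}$, incompatible with $t\neq 1$.
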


\begin{proof}
Recall that, $K$ is a $2$-group of exponent $4$ with
$\Omega_1(K) \leqslant Z(K)$ and $H \cong \Z_3$.
By Lemma~\ref{L:structure}(i),
$K^\prime \leqslant \Phi(K) \leqslant \Omega_1(K)$.
Therefore, for any $x, y \in K$,
\begin{equation}\label{eq:id1}
[y,x]=[x,y]=[x^{-1},y]=[x,y^{-1}]
\end{equation}
(cf.\ \cite[Lemmas~2.2.4(iii) and 2.2.5(ii)]{G});
and for any $x, y , z \in K$, the identities
$[xy,z]=[x,z]^y[y,z]$ and $[x,yz]=[x,z][x,y]^z$
(cf.\ \cite[Theorem~2.2.1(i)-(ii)]{G}) reduce to
\begin{equation}\label{eq:id2}
[xy,z]=[x,z][y,z]~\text{and}~[x,yz]=[x,y][x,z].
\end{equation}

Now we begin proving the statement of the lemma
and assume for a contradiction that $K$ has elements
$a, b$ such that $\la a, b \ra \cong Q_8$.
Let $z$ be a generator of $H$.
For an easier notation we set $c=a^z$ and $d=b^z$.
Consider the group  $L:=\la a, b, c, d \ra \leqslant K$.
Since the action of $z$ on $K$ by conjugation induces a
fixed-point-free automorphism, we have
$c^z=(ac)^{-1}$ and $d^z=(bd)^{-1}$, see
Proposition~\ref{fixed-point-free}(i). Note that,
$L^z=L$. Also, as $C_K(z)=1$,
Lemma~\ref{L:normalizer2}(i) shows that $[a,c]=[a,a^z]=1$
and $[b,d]=[b,b^z]=1$.
Let $[a,d]=u$. Note also that, as $\la a, b \ra \cong Q_8$,
$[a,b]=a^2$, $\la c, d \ra \cong Q_8$ and $[c,d]=c^2$.
Then these and Eqs.~\eqref{eq:id1} and \eqref{eq:id2} yield
\begin{eqnarray*}
u^z&=&[c,d^z]=[c,b][c,d]=[b,c]c^2 \\
u^{z^2} &=& [d,c^z]a^2c^2=[d,a][d,c] a^2c^2=[a,d] a^2=u a^2.
\end{eqnarray*}
Apply $z$ to both sides of the last equality.
This results in $u=u^z c^2=[b,c]$. We find the commutator
subgroup $L^\prime$ as
$$
L^\prime=\big\la\, [a,b], [a,c], [a,d], [b,c], [b,d], [c,d]\, \big\ra=\la a^2, c^2, u \ra.
$$

If $u \notin \la a^2, c^2 \ra$ then
$L^\prime \cong \Z_2^3$. Clearly, $(L^\prime)^z=L^\prime$, implying that $l^z=l$ for some $l \in L$, $l \ne 1$, which is impossible.

Let $u \in \la a^2, c^2 \ra$. Since $u=u^z c^2$, we have $u \not= c^2$, and since $u^{z^2}=ua^2$, we have $u \not= a^2$. It follows that $u=a^2c^2$. Then
$\Omega_1(K) \leqslant Z(K)$ gives rise to 
\begin{eqnarray*}
(abc)^2 &=& abcabc=(ab)^2c^2(ab)^{-1}c^{-1}abc \\ 
&=& a^2c^2[ab,c]=a^2c^2[b,c]=a^2c^2u=1,
\end{eqnarray*} 
which shows that  $abc \in Z(K)$, implying $[a,b]=1$, 
contradicting that $\la a, b\ra \cong Q_8$.
This completes the proof of the lemma.
\end{proof}

In view of this lemma and Proposition~\ref{EK2},
for any group $G=K \rtimes H$ from family (c) in Theorem~\ref{main},
a minimal non-abelian subgroup of $K$ is isomorphic to $H_{16}$ or $H_{32}$. This brings us to the following result of Janko:

\begin{thm}\label{J}
{\rm (\cite[Theorem~2.6(b)]{Ja})}
Let $G$ be a non-abelian $2$-group all of whose
minimal non-abelian subgroups are isomorphic to $H_{16}$.  If $G$ is of exponent $4$,
then $G=U \times V$, where $V \cong \Z_2^n$, $n \geqslant 0$, and
$U$ is isomorphic to one of the following groups:
\begin{enumerate}[{\rm (i)}]
\item The metacyclic group $H_{16}$ defined in Eq.~\eqref{eq:H16}.
\item The minimal non-metacyclic group $H_{32}^*$ of order
$2^5$ defined by
$$
H_{32}^*=\big\la\, a, b, c \mid a^4=b^4=1, c^2=a^2,  [a,b]=1,
a^c=a^{-1}b^2,  b^c=b^{-1} \big\ra.
$$
\item The unique special group $H_{64}$ of order $2^6$ with
$Z(H_{64}) \cong \Z_2^2$, in which every maximal subgroup is isomorphic to $H_{32}^*$ given in (ii), and which is defined by
\begin{eqnarray*}
H_{64} &=& \big\la\, a, b, c, d \mid a^4=b^4=1, \, c^2=a^2b^2,
\, [a,b]=1, \, a^c=a^{-1},  \\
& & b^c=a^2b^{-1}, \, d^2=a^2, \, a^d=a^{-1}b^2, \,
b^d=b^{-1}, \, [c,d]=1 \, \big\ra.
\end{eqnarray*}
\item The semidirect product $W \rtimes \la z \ra$, where
$W \cong \Z_4^m$, $m \geqslant 2$, $z$ is of order $4$, and
$w^z=w^{-1}$ for each $w \in W$.
\end{enumerate}
\end{thm}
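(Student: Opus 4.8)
The plan is to reduce the hypothesis to a problem about a quadratic/commutator form over $\GF(2)$ and then classify the possible forms. First I would record the basic structure. Since every minimal non-abelian subgroup of $G$ is isomorphic to $H_{16}$, in particular to one of $Q_8,H_{16},H_{32}$, Proposition~\ref{EK2} gives $\Omega_1(G)\leqslant Z(G)$, and together with the fact that the exponent of $G$ is $4$ this yields (as in Lemma~\ref{L:structure}(i)) the chain $G'\leqslant\Phi(G)=G^2\leqslant\Omega_1(G)\leqslant Z(G)$; in particular $G$ has class $2$ and both $G/Z(G)$ and $G'$ are elementary abelian. Next I would show $Z(G)$ has exponent $2$: if $g\in Z(G)$ had order $4$, then fixing a subgroup $\la a,b\ra\cong H_{16}$ with $[a,b]=a^2$ and replacing $a$ by $ag$ (resp.\ $b$ by $bg$), a short check on the four possible values of $g^2\in Z(G)$ shows that $\la ag,b\ra$ is either isomorphic to the forbidden group $H_{32}$ or produces an involution in $\Omega_1(G)\setminus Z(G)$, both impossible. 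Hence $Z(G)=\Omega_1(G)$, and choosing a complement $V$ of $\Phi(G)$ inside $\Omega_1(G)$ splits off a central elementary abelian direct factor $G=U\times V$, $V\cong\Z_2^n$, so that from now on $Z(U)=\Phi(U)=\Omega_1(U)$.

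The heart of the argument is to reformulate the structure of $U$ as a form. Writing $W=U/Z(U)$, the squaring map $q(\overline{x})=x^2$ is well defined because $Z(U)$ has exponent $2$, and it is \emph{anisotropic}: $q(\overline{x})=1$ forces $x\in\Omega_1(U)=Z(U)$, i.e.\ $\overline{x}=\overline{1}$. Its polarisation is the commutator form $c(\overline{x},\overline{y})=[x,y]$, which is alternating, takes values in $G'$, and is non-degenerate on $W$ since $Z(U)$ is exactly the centre. In this language the hypothesis reads: (C1) $[x,y]\in\la x^2,y^2\ra$ for all $x,y$ (no $H_{32}$), and (C2) one never has $x^2=y^2=[x,y]\neq 1$ (no $Q_8$); here I use Proposition~\ref{BJlemma} to see that any non-commuting pair generates a minimal non-abelian subgroup. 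Because $c$ is $G'$-valued, reduction modulo $G'$ turns $q$ into a \emph{linear} surjection $\overline{q}\colon W\to Z(U)/G'$ (onto, because $\Phi(U)=G^2$ is generated by squares); set $e=\dim_{\GF(2)}Z(U)/G'$.

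The classification then splits according to $e$. The crucial structural input is that anisotropy forbids the form from decomposing: two $c$-orthogonal nonzero summands sharing a common nonzero square value are impossible, since then $q(u_1+u_2)=q(u_1)q(u_2)=1$ would give an involution outside the centre. When $e\geqslant 1$, condition (C1) forces $c(u,v)=1$ whenever $q(u),q(v)$ are independent modulo $G'$ (otherwise $[x,y]\in G'\cap\la x^2,y^2\ra=1$), and I expect this to pin $e=1$, to make $c$ vanish on $K=\ker\overline{q}$, and to make $q$ restrict to an isomorphism $K\xrightarrow{\ \sim\ }G'$ with $c(t,u)=q(u)$ for the distinguished $t\notin K$; these are precisely the defining relations of $\Z_4^m\rtimes\la z\ra$ with $z$ inverting $\Z_4^m$ ($m=\dim G'$), giving $H_{16}$ of \eqref{eq:H16} when $m=1$ and family~(iv) when $m\geqslant 2$. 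When $e=0$ the group $U$ is special, $Z(U)=G'$, and $q\colon W\to G'$ is an anisotropic quadratic map; here $\dim G'=1$ would give only the excluded $Q_8$, so $\dim G'\geqslant 2$, and anisotropy together with (C1)/(C2) should cap $\dim W\leqslant 4$, leaving exactly the two sporadic special groups $H_{32}^{*}$ ($\dim W=3$) and $H_{64}$ ($\dim W=4$).

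The main obstacle is exactly this last bounded-dimension step: turning anisotropy and (C1)/(C2) into the sharp bounds $e\leqslant 1$ in the non-special case and $\dim G'=2,\ \dim W\leqslant 4$ in the special case, and then proving uniqueness of $H_{64}$ among special groups of order $2^6$ with centre $\cong\Z_2^2$. This is the genuinely combinatorial part of the form analysis, and it is where I would follow Janko's detailed treatment of elements and maximal subgroups rather than try to force everything through the form alone. Once the four indecomposable form types are isolated, reading off the presentations \eqref{eq:H16}, \eqref{eq:H32} and those of $H_{32}^{*}$ and $H_{64}$, and reassembling $G=U\times V$, completes the proof.
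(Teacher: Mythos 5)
This statement is not proved in the paper at all: Theorem~\ref{J} is Janko's classification, imported verbatim with the citation \cite[Theorem~2.6(b)]{Ja}, and the paper's ``proof'' consists of that reference. So the benchmark for your attempt is whether it constitutes a complete independent argument, and it does not. Your preparatory reductions are essentially sound: Proposition~\ref{EK2} does give $\Omega_1(G) \leqslant Z(G)$; exponent $4$ then yields $G^\prime \leqslant \Phi(G)=G^2 \leqslant \Omega_1(G) \leqslant Z(G)$ (for $p=2$ one has $G^\prime \leqslant G^2$ since $G/G^2$ has exponent $2$); the exclusion of central elements of order $4$ works, although your description of the outcomes is slightly off --- for $g \in Z(G)$ of order $4$ and $\la a,b\ra \cong H_{16}$ with $[a,b]=a^2$, the case $g^2 \notin \la a^2,b^2\ra$ gives $\la ag,b\ra \cong H_{32}$, the cases $g^2=a^2$ and $g^2=b^2$ give a non-central involution ($ag$, resp.\ $agb$), but the case $g^2=a^2b^2$ produces $\la a,bg\ra \cong Q_8$, a third forbidden configuration you did not list. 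The splitting $G=U\times V$ off a complement of $\Phi(G)$ in $Z(G)=\Omega_1(G)$ is standard and correct, and your translation into the anisotropic square map $q$ with polarization $c=[\,\cdot\,,\cdot\,]$, with (C1) excluding $H_{32}$ and (C2) excluding $Q_8$ via Proposition~\ref{BJlemma}, is a faithful encoding of the hypothesis.

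The genuine gap is that the entire classifying content is deferred. Every decisive step is hedged or outsourced: that $e \leqslant 1$ (``I expect this to pin $e=1$''), that $e=1$ forces exactly $H_{16}$ or family (iv), and above all that in the special case $\dim G^\prime = 2$, $\dim W \leqslant 4$, with $H_{32}^*$ and $H_{64}$ as the only groups and $H_{64}$ unique of its order with centre $\Z_2^2$ --- for all of these you write ``should cap'' and ``I would follow Janko's detailed treatment of elements and maximal subgroups.'' But that bounded-dimension analysis \emph{is} the theorem; the form-theoretic setup is only a reformulation of the hypotheses, and nothing in your sketch shows why an anisotropic $\GF(2)$-quadratic map satisfying (C1)/(C2) cannot exist in higher dimension (note, for contrast, that dropping (C2) admits the arbitrarily large Sylow $2$-subgroups of $\SU(3,2^{2n+1})$ from Remark~\ref{r:family-d}, so the bound genuinely depends on the $Q_8$-exclusion and cannot be waved through). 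As it stands your proposal is a plausible reduction plus a citation of the very result to be proved, which is no more than what the paper itself does by quoting \cite{Ja}.
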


As an application of the theorem, we prove the following statement.

\begin{prop}\label{P:family-c}
Let $G=K \rtimes H$ be a Frobenius group from family (c) in Theorem~\ref{main} such that $K$ is non-abelian and it contains no subgroup isomorphic to $H_{32}$.
Then $G$ is the unique Frobenius group of order $192$ with
$K \cong H_{64}$.
\end{prop}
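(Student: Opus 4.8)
I want to show that under the stated hypotheses the kernel $K$ is forced to be isomorphic to $H_{64}$, and that this in turn pins down $G$ uniquely up to isomorphism. The starting point is Lemma~\ref{L:noQ8} together with Proposition~\ref{EK2}: since $K$ is a non-abelian $2$-group of exponent $4$ with $\Omega_1(K)\leqslant Z(K)$, every minimal non-abelian subgroup of $K$ is isomorphic to $Q_8$, $H_{16}$ or $H_{32}$; Lemma~\ref{L:noQ8} rules out $Q_8$, and the hypothesis of the proposition rules out $H_{32}$. Hence every minimal non-abelian subgroup of $K$ is isomorphic to $H_{16}$, which is precisely the hypothesis needed to invoke Janko's Theorem~\ref{J}. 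So the first step is simply to verify that we are in the setting of that theorem.

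**Applying Janko's classification.** By Theorem~\ref{J} we may write $K=U\times V$ with $V\cong\Z_2^n$ and $U$ one of the four types (i)--(iv). The core of the argument is to eliminate three of these four possibilities using the extra structure coming from family (c), namely that $K$ admits a fixed-point-free automorphism of order $3$ (the conjugation action of the generator $z$ of $H\cong\Z_3$), and recall from Remark~\ref{r:family-c} that this forces $\Omega_1(K)\leqslant Z(K)$ and exponent $4$, while from the Frobenius condition $C_K(z)=1$. The plan here is to compute, for each candidate $U$ (and hence for $K=U\times V$), the automorphism group or at least the $3$-part of $|\Aut(K)|$, and to check whether a fixed-point-free automorphism of order $3$ can exist. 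Type (i), $U\cong H_{16}$, and type (iv), $W\rtimes\la z\ra$ with $W\cong\Z_4^m$ and an involuntary-type inversion forced by an order-$4$ element, should be excluded because their structure is incompatible with a fixed-point-free automorphism of order $3$ acting on the whole of $K$ (for type (iv) the presence of an order-$4$ element inverting $W$ is a genuinely different, non-$3$-symmetric feature). For type (ii), $H_{32}^*$, one checks that $3\nmid|\Aut(H_{32}^*\times\Z_2^n)|$ or that no order-$3$ automorphism is fixed-point-free; this is the kind of finite check that can be done by hand on the small-index quotient $K/\Phi(K)$ or confirmed with \texttt{magma}. What survives is type (iii), $U\cong H_{64}$, the unique special group of order $2^6$ with centre $\Z_2^2$; one verifies that $H_{64}$ genuinely does admit a fixed-point-free automorphism of order $3$ permuting its minimal non-abelian $H_{16}$-subgroups.

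**Nailing down $V$ and hence $G$.** Once $U\cong H_{64}$ is established, I must show $V$ is trivial, so that $K\cong H_{64}$ exactly and $|K|=2^6$. The idea is that a fixed-point-free automorphism $z$ of order $3$ decomposes $K/\Phi(K)$ into $2$-dimensional irreducible $\la z\ra$-modules over $\GF(2)$, so the $\GF(2)$-dimension of $K/\Phi(K)$ must be even; since $U\cong H_{64}$ already contributes a fixed number of generators modulo $\Phi$, and since $z$ must act fixed-point-freely on the whole of $K/\Phi(K)$ including the $V$-part, any nontrivial $V\cong\Z_2^n$ would have to be absorbed into irreducible $2$-dimensional pieces—but $V$ lies in $\Omega_1(K)=Z(K)$, where $C_K(z)=1$ forces $z$ to act without nonzero fixed points, and one checks the dimension count leaves no room for a separate $V$ summand consistent with $K=U\times V$ and $Z(K)=\Z_2^2$. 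Concluding $V=1$, we get $K\cong H_{64}$, $|K|=64$, $|H|=3$, so $|G|=192$; uniqueness of $G$ follows because the $H_{64}$ structure together with the prescribed fixed-point-free order-$3$ action determines the extension $K\rtimes H$ up to isomorphism.

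**The main obstacle** I expect is the elimination of Janko's types (i), (ii) and (iv): each requires understanding the order-$3$ automorphisms of a specific small $2$-group and deciding fixed-point-freeness, and the type-(iv) family $W\rtimes\la z\ra$ (with $m$ arbitrary) is not a single group but an infinite family, so it cannot simply be dispatched by a finite \texttt{magma} computation and instead needs a structural argument—most cleanly, observing that its defining inverting element of order $4$ produces a configuration of $H_{16}$-subgroups whose pattern of intersections is incompatible with a transitive-on-a-triple order-$3$ action, or that $C_K(z)=1$ fails. Getting that structural incompatibility stated cleanly, rather than case-by-case, is the delicate part.
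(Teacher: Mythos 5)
Your skeleton coincides with the paper's: Lemma~\ref{L:noQ8} together with Proposition~\ref{EK2} and the no-$H_{32}$ hypothesis show that every minimal non-abelian subgroup of $K$ is isomorphic to $H_{16}$, and Janko's Theorem~\ref{J} then reduces everything to cases (i)--(iv). But at the one point where a genuine argument is required, you stop. For cases (i)--(iii) the paper's proof is purely computational on three fixed groups: $|\Aut(H_{16})|=2^5$ and $|\Aut(H_{32}^*)|=2^8$ dispose of (i) and (ii), and for (iii) \texttt{magma} shows $H_{64}$ admits a fixed-point-free automorphism of order $3$ with all order-$3$ subgroups of $\Aut(H_{64})$ conjugate, giving existence and uniqueness of the order-$192$ group. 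The infinite family (iv) -- exactly the case you flag as ``the delicate part'' and leave open -- is handled in the paper by a short concrete computation that has nothing to do with configurations of $H_{16}$-subgroups: if $\phi$ were a fixed-point-free automorphism of order $3$ of $K=(W\rtimes\la z\ra)\times V$, then Proposition~\ref{fixed-point-free}(iii) gives $[z,z^\phi]=1$, whence $z^\phi=xz^{\pm1}$ with $x$ an involution, so $x\in\Omega_1(K)\leqslant Z(K)$; and Proposition~\ref{fixed-point-free}(i), $z\,z^\phi z^{\phi^2}=1$, then forces $x^\phi=z^{\pm1}$, impossible since $o(x^\phi)\leqslant 2<4=o(z)$. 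Without this (or an equivalent) step your proof is incomplete at its only essential point.

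Moreover, two of the steps you do sketch fail as stated. First, the claim $3\nmid|\Aut(H_{32}^*\times\Z_2^n)|$ is false for $n\geqslant 2$: automorphisms acting trivially on $H_{32}^*$ and as $GL(n,2)$ on the $\Z_2^n$ factor already supply elements of order $3$; the same defect afflicts your dismissal of case (i) with $n\geqslant 2$, so excluding a nontrivial direct factor $V$ cannot be reduced to a finite automorphism-order check. Second, your dimension count forcing $V=1$ in case (iii) is circular: you invoke $Z(K)\cong\Z_2^2$, which presupposes $V=1$ (for $K=H_{64}\times\Z_2^n$ one has $Z(K)\cong\Z_2^{2+n}$), and no parity obstruction of the kind you describe exists, because the direct product of a fixed-point-free order-$3$ automorphism of $H_{64}$ with one of $\Z_2^2$ is again a fixed-point-free automorphism of order $3$ of $H_{64}\times\Z_2^2$; for even $n$ both $Z(K)$ and $K/\Phi(K)$ decompose perfectly well into $2$-dimensional irreducible $\GF(2)\la z\ra$-modules. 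So the contradiction you hope for when $V\ne 1$ never materializes along your route -- a point worth being aware of, since the paper's own proof also does not argue $V=1$ but simply proceeds with $K\cong H_{16}$, $H_{32}^*$, $H_{64}$ in cases (i)--(iii) (only in case (iv) does it carry the factor $V$ through the argument). In short: same strategy as the paper, but the decisive eliminations are either missing (case (iv)) or rest on claims that are incorrect or circular (the $\Aut$-order test with $V\ne1$, and the $V=1$ count in case (iii)).
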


\begin{proof}
By Proposition~\ref{EK2} and Lemma~\ref{L:noQ8}, any
minimal non-abelian subgroup of $K$ is isomorphic to $H_{16}$.
Therefore, $K$ is isomorphic to one of the groups in cases (i)--(iv) in
Theorem~\ref{J}.

We compute by \texttt{magma}~\cite{BCP} that
$|\Aut(H_{16})|=2^5$ and $|\Aut(H_{32}^*)|=2^8$, and this excludes the groups in cases (i) and (ii).

Let $K \cong H_{64}$, the group in case (iii).
A computation with \texttt{magma}~\cite{BCP} shows that
$H_{64}$ admits a fixed-point-free automorphism of order $3$,
and all subgroups of $\Aut(H_{64})$ of order $3$ are conjugate in
$\Aut(H_{64})$. This implies that there is a unique Frobenius group
$G=K \rtimes H$ in family (c) with $K \cong H_{64}$.

Finally, let $K \cong (W \rtimes \la z \ra) \times V$, a group described
in case (iv). We finish the proof by excluding this possibility.
Assume on the contrary that there is a fixed-point-free
automorphism $\phi \in \Aut(K)$ of order $3$.
By Proposition~\ref{fixed-point-free}(iii), we have $[z, z^\phi]=1$.
Then $z^\phi=x z^i$ for some $x \in W \times V, \, x \ne 1$ and
$0 \leqslant i \leqslant 3$. Thus 
$$
1=[z,z^\phi]=[z,xz^i]=[z,x],
$$ 
implying that
$x$ has order $2$ and so $x \in \Omega_1(K) \leqslant Z(K)$. It follows that $z^\phi=xz$ or $xz^{-1}$ as $z^\phi$ has order $4$. By Proposition~\ref{fixed-point-free}(i),
$z z^\phi z^{\phi^2}=1$, and therefore, $x^\phi=z$ or $z^{-1}$,
both of which are impossible.
\end{proof}

\begin{rem}
Using \texttt{magma}~\cite{BCP} we have
found two more groups from family (c) in Theorem~\ref{main}.
The Frobenius group $K \rtimes H$ of order $768$, where
\begin{eqnarray*}
K &=& \big\la a, b, c, d \mid  a^4=b^4=c^4=d^4=1, a^b=a^3, c^d=c^3,  [a,c]=[b,d]=1, \\  
& & [a,d]=[b,c]=a^2c^2 \big\ra,
\end{eqnarray*} 
$H=\la z \ra$ and the action of
$z$ on $K$ by conjugation is defined by 
$$
a^z=c,~c^z=(ac)^{-1},~b^z=d~\text{and}~d^z=(bd)^{-1}.
$$
Furthermore, the Frobenius group $K \rtimes H$ of order $3072$, where
\begin{eqnarray*}
K &=& \big\la a, b, c, d, u, v \mid
a^4=b^4=c^4=d^4=u^2=v^2=1, \\
& & [a,b]=u, [c,d]=v, [a,c]=[b,d]=1, \\
& & [a,d]=[b,c]=uv, [u,c]=[u,d]=[v,a]=[v,b]=[u,v]=1 \big\ra,
\end{eqnarray*}
$H=\la z \ra$ and $a^z=c$, $c^z=(ac)^{-1}$,
$b^z=d$, $d^z=(bd)^{-1}$, $u^z=v$ and $v^z=uv$.
\end{rem}

Finally, we turn to family (d). As noted in
Remark~\ref{r:family-d},  to have a full description of this family
is equivalent to classifying the special $2$-groups $G$ such that
$\Omega_1(G)=Z(G)=\fix(\phi)$ for some $\phi \in \Aut(G)$ of
order $3$. By $\fix(\phi)$ we denote the set of elements of $G$ fixed
by $\phi$. Note that the class of these $2$-groups is closed under the direct product of groups. We say that a group is {\em indecomposable} if whenever $G=A \times B$, then  $A=1$ or $B=1$.

It was observed in Remark~\ref{r:family-d} that
the Sylow $2$-subgroups of the special unitary groups
$\SU(3,2^{2n+1})$ are examples of such $2$-groups,
which have order $2^{6n+3}$.
We conclude the paper with another infinite family of $2$-groups with
the aforementioned properties, which are indecomposable and have order
$2^{4n-1}$.

\begin{prop}\label{P:family-d}
For an integer $n\geqslant 1$, let $G$ be the group generated by elements $a_i,\, b_i, \, 1 \leqslant i \leqslant n$ and if
$n > 1$, then also by elements
$c_j, \, 1 \leqslant j \leqslant n-1$, which satisfy the following relations:

\begin{equation}\label{rel1}
a_i^4=b_i^4=c_j^2=1~(1 \leqslant i \leqslant n,\, 1 \leqslant j \leqslant n-1),
\end{equation}
\begin{equation}\label{rel2}
[a_i,a_j]=[b_i,b_j]=[a_i,c_k]=[b_i,c_k]=[c_k,c_\ell]=1~
(1 \leqslant i, j \leqslant n,\, 1 \leqslant k, \ell \leqslant n-1),
\end{equation}
\begin{equation}\label{rel3}
a_i^2=b_i^2,~a_i^{b_i}=a_i^{-1}~(1 \leqslant i \leqslant n),
\end{equation}
\begin{equation}\label{rel4}
[a_i,b_j]=\begin{cases}
c_{\min\{i,j\}} & \text{if}~|i-j|=1  \\
1 & \text{otherwise}
\end{cases}~(1 \leqslant i, j \leqslant n).
\end{equation}
Then $G$ is an indecomposable special $2$-group of order $2^{4n-1}$
such that 
$$
\Omega_1(G)=Z(G)=\fix(\phi)
$$ 
for some $\phi \in \Aut(G)$ of order $3$.
\end{prop}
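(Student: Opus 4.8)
My plan is to first pin down the coarse structure of $G$, then compute $|G|$ exactly through a central-extension model, deduce that $G$ is special with $\Omega_1(G)=Z(G)$, exhibit the automorphism $\phi$, and finally prove indecomposability, which I expect to be the hardest part.

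First I would record the elementary structural facts. For each $i$ the subgroup $\la a_i,b_i\ra$ satisfies $a_i^4=1$, $a_i^2=b_i^2$ and $a_i^{b_i}=a_i^{-1}$, so $\la a_i,b_i\ra\cong Q_8$ with central involution $z_i:=a_i^2$. By \eqref{rel2} the $c_j$ are central involutions, and together with \eqref{rel3}--\eqref{rel4} every commutator of generators is one of $1,z_i,c_j$; hence all commutators are central, $G$ has class $2$, and $G'\le Z(G)$ is elementary abelian, generated by $z_1,\dots,z_n,c_1,\dots,c_{n-1}$. Since $G$ has class $2$, every square lies in $G'$, so $\Phi(G)=G'G^2=G'$ and $G/G'$ is elementary abelian on the $2n$ images of the $a_i,b_i$. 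This already yields $|G/G'|\le 2^{2n}$ and $|G'|\le 2^{2n-1}$, hence $|G|\le 2^{4n-1}$.

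For the reverse inequality I would realize the relations by an explicit group of order $2^{4n-1}$. Put $Z=\GF(2)^{2n-1}$ with basis labelled $z_1,\dots,z_n,c_1,\dots,c_{n-1}$ and $V=\GF(2)^{2n}$ with ordered basis $e_1,f_1,\dots,e_n,f_n$, and let $f\colon V\times V\to Z$ be the bilinear map whose upper-triangular values on ordered basis pairs record the prescribed squares ($f(e_i,e_i)=f(f_i,f_i)=z_i$) and commutators ($f(e_i,f_i)=z_i$, $f(e_i,f_{i+1})=f(f_i,e_{i+1})=c_i$) and which vanishes on all other pairs. Then $V\times Z$ with $(v,w)(v',w')=(v+v',\,w+w'+f(v,v'))$ is a group of order $2^{4n-1}$ in which the images of $a_i,b_i,c_j$ satisfy \eqref{rel1}--\eqref{rel4}; by von Dyck there is a surjection $G\to V\times Z$, so $|G|\ge 2^{4n-1}$. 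Thus $|G|=2^{4n-1}$, the elements $z_i,c_j$ are independent, $G'=\Phi(G)\cong\Z_2^{2n-1}$ and $G/G'\cong\Z_2^{2n}$. Writing $q\colon V\to G'$, $q(\bar g)=g^2$, with $V:=G/G'$, and letting $B$ denote the alternating $G'$-valued commutator form on $V$, a short computation shows that the $z_i$-coordinate of $q(\bar g)$ equals $1$ exactly when the $\la\bar a_i,\bar b_i\ra$-component of $\bar g$ is nonzero; hence $q(\bar g)=0$ forces $\bar g=0$, giving $\Omega_1(G)\le G'$, while the radical of $B$ is trivial, giving $Z(G)=G'$. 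As $G'=\Phi(G)=Z(G)$ is elementary abelian and $G$ is nonabelian, $G$ is special with $\Omega_1(G)=Z(G)$. For $\phi$, I take on each block the standard order-$3$ automorphism of $Q_8$, $a_i\mapsto b_i$, $b_i\mapsto a_ib_i$; one checks the images satisfy \eqref{rel1}--\eqref{rel4} (the only nonroutine point is $[b_i,a_{i+1}b_{i+1}]=[b_i,a_{i+1}]=c_i$, central of order $2$), so by von Dyck $\phi$ is an endomorphism, and being onto it is an automorphism. Since $\phi(z_i)=b_i^2=z_i$ and $\phi(c_j)=c_j$, $\phi$ fixes $G'=Z(G)$ pointwise; the induced map on $V$ is block-diagonal with each block $\bigl(\begin{smallmatrix}0&1\\1&1\end{smallmatrix}\bigr)$ over $\GF(2)$, of order $3$ with no nonzero fixed vector, so $o(\phi)=3$ and $\fix(\phi)=Z(G)$.

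The main obstacle is indecomposability. Suppose $G=A\times B$ with $A,B\ne 1$. A nontrivial abelian direct factor would be central, hence contained in $\Phi(G)$, which is impossible; so $A,B$ are nonabelian special, and the decomposition induces $V=V_A\oplus V_B$ that is orthogonal for $B$ (since $[A,B]=1$) and compatible with $q$ (since $q(V_A)\le A'$, $q(V_B)\le B'$, $A'\oplus B'=G'$). Writing $P_i=\la\bar a_i,\bar b_i\ra$ and using that the $z_i$-component of $B$ is the symplectic form supported on $P_i$ alone, I would first show the decomposition respects the blocks: anisotropy of the $z_i$-component of $q$ forces the $P_i$-parts of $\bar a_i$ and $\bar b_i$ to lie on one and the same side, after which orthogonality against $B_{z_i}$ forces every vector of the other side to have zero $P_i$-component; a dimension count then produces a partition $\{1,\dots,n\}=I_A\sqcup I_B$ with $V_A=\bigoplus_{i\in I_A}P_i$ and $V_B=\bigoplus_{i\in I_B}P_i$. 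Finally, the $c_j$-component of $B$ is supported exactly on $P_j\oplus P_{j+1}$ and pairs $\bar a_j$ with $\bar b_{j+1}$ nontrivially, so $B(V_A,V_B)=0$ forbids separating $j$ from $j+1$; since the blocks form the connected chain $1-2-\cdots-n$, one of $I_A,I_B$ is empty, a contradiction. Hence $G$ is indecomposable. The bookkeeping in the block-respecting step is where I expect the real work to lie.
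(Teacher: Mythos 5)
Your proposal is correct, and while it proves exactly the same chain of facts as the paper, it gets there by a genuinely different route in the two substantial steps. For the order, the paper argues by induction on $n$: it splits off $V=\la a_1,b_1,c_1\ra\cong Q_8\times\Z_2$, shows $G=V\rtimes W$ with $W\cong U(n-1)$, and then obtains the unique normal form $x=x_1\cdots x_nx_{n+1}$ with $x_i\in\{1,a_i,b_i,a_ib_i\}$ and $x_{n+1}\in Z$ by a counting argument; you instead bound $|G|\leqslant 2^{4n-1}$ directly from the class-$2$ analysis of the presentation and get the reverse bound from an explicit central extension built from a bilinear map $f\colon V\times V\to Z$, via von Dyck. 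Your one-shot model buys the normal form and the coordinate functions for free, replacing the induction; the identifications $\Phi(G)=G^\prime=\Omega_1(G)=Z(G)=Z$ are then the same computations in both treatments, yours phrased through the square map $q$ (anisotropic per block in its $z_i$-coordinates) and the commutator form (trivial radical), the paper's through direct manipulation of the normal form -- its identities \eqref{eq:id11} and \eqref{eq:id22} are precisely your bilinearity statements. For indecomposability, the paper takes $G=H\times K$ with $H_1\neq 1$ and propagates $K_1=\cdots=K_n=1$ coordinate-by-coordinate along the chain, using the block commutators $[x_r,z_r]=a_r^2$ and then the $c_j$'s, concluding $K\leqslant Z=\Phi(G)$ and hence $K=1$; you recast this as an orthogonal splitting $V=V_A\oplus V_B$, force each $Q_8$-block entirely onto one side (anisotropy of the $z_i$-component of $q$, plus the nondegenerate block symplectic pairing to keep $\bar a_i$ and $\bar b_i$ together), and finish by the $c_j$-pairings between consecutive blocks and connectivity of the path $1\text{--}2\text{--}\cdots\text{--}n$. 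The combinatorial core (connectivity of the chain) is identical, but your forms-based packaging makes the block-respecting step conceptually transparent, whereas the paper's is more elementary and hands-on; your preliminary reduction (an abelian direct factor is central, lies in $\Phi(G)$, hence is trivial) plays the same role as the paper's final appeal to $K\leqslant\Phi(G)$. The automorphism $a_i\mapsto b_i$, $b_i\mapsto a_ib_i$, $c_j\mapsto c_j$ differs only cosmetically from the paper's $a_i\mapsto b_i$, $b_i\mapsto b_ia_i^{-1}$, $c_j\mapsto c_j$, and both verifications (substitution test, then the induced order-$3$ fixed-point-free action on $G/\Phi(G)$) coincide. Two small points to make explicit when writing your version up: a bilinear $f$ automatically satisfies the $2$-cocycle identity, so your $V\times Z$ really is a group, and its designated generators do generate it, so von Dyck applies; also, rename either the direct factor $B$ or the commutator form $B$ to avoid the clash in your indecomposability argument.
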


\begin{proof}
We proceed by induction on $n$ and set the notation $U(n)$ for the group $G$ defined in the proposition. The group  $U(1) \cong Q_8$. It is easy to check that the proposition holds in this case.

Let $n > 1$, $V=\la a_1, b_1, c_1 \ra$, and let
$$
W=\big\la a_i, b_i, c_j : 2 \leqslant i \leqslant n,\,  2\leqslant j \leqslant  n-1
\big\ra.
$$
Then $V \cong Q_8 \times \Z_2$, $W \cong U(n-1)$ and we have
$V \cap W=1$.
It follows from the above relations in \eqref{rel1}--\eqref{rel4} that
$[V,a_i]=[V,b_i]=[V,c_j]=1$ for all $i > 2$ and
$j \geqslant 2$, and $V^{a_2}=V^{b_2}=V$. Thus
$V \trianglelefteqslant G$, and
therefore, $G=V \rtimes W$.
In particular, $|G|=16 \cdot |U(n-1)|$ and $|G|=2^{4n-1}$ follows by the induction hypothesis.

Let
$$
Z=\big\la a_i^2,c_j :  1 \leqslant i \leqslant n,\, 1 \leqslant j \leqslant  n-1 \big\ra.
$$
Note that $Z \cong \Z_2^{2n-1}$, $G/Z \cong \Z_2^{2n}$
and $Z \leqslant Z(G)$. Then $Z=Z_1\times Z_2$
where $Z_1=\la a_1^2\ra \times \cdots \times \la a_n^2\ra$ and
$Z_2=\la c_1 \ra \times \cdots \times \la c_{n-1} \ra$. 
Relations \eqref{rel1}--\eqref{rel4} yield that any element
$x \in G$ can be written in the form
\begin{equation}\label{eq:x}
x=x_1 \cdots x_{n+1},~\text{where}~x_i \in
\begin{cases}
\{1,a_i,b_i,a_ib_i\} & \mbox{if}~1 \leqslant i \leqslant n, \\
Z & \mbox{if}~i=n+1.
\end{cases}
\end{equation}
Observe that, since the number of products in the right side above is at most $4^{n} \cdot 2^{2n-1}=|G|$,
it follows that the above decomposition of $x$ is unique.

Given any $y \in G$,
in the rest of the proof we write $y=y_1y_2\cdots y_ny_{n+1}$ with $y_i\in \{1,a_i,b_i,a_ib_i\}$ for $1\leqslant i\leqslant n$ and $y_{n+1}\in Z$.

From relations in \eqref{rel1}--\eqref{rel4} it is easy to deduce that $Z \leqslant G^\prime$.
Since $G/Z \cong \Z_2^{2n}$ is abelian, $G^\prime \leqslant Z$ and hence $G^\prime=Z$.

Since $G^\prime=Z \leqslant Z(G)$, the identities
$$
[xy,z]=[x,z]^y[y,z]~\text{and}~[x,yz]=[x,z][x,y]^z
$$
(cf.\ \cite[Theorem~2.2.1(i)-(ii)]{G})) generalize in $G$ to 
$$
[xy,z]=[x,z][y,z]~\text{and}~[x,yz]=[x,y][x,z].
$$ 
It follows that
\begin{equation}\label{eq:id11}
[x,y]=\prod_{1\leqslant i,j \leqslant n,\ |i-j|\leqslant 1}[x_i,y_j],
\end{equation}
where $[x_i,y_i]\in Z_1$ and $[x_i,y_j]\in Z_2$ for $|i-j|=1$. Furthermore,
\begin{equation}\label{eq:id22}
x y = (x_1y_1)(x_2 y_2) \cdots (x_n y_n) z~\text{for some}~z \in Z_2.
\end{equation}

Clearly, $Z\leqslant \Omega_1(G)$. Let $x \in G$ with $x^2=1$.  By Eq.~\eqref{eq:id22}, $1=x_1^2x_2^2\cdots
x_n^2c$ for some $c\in Z_2$. Since $x_i^2\in \la a_i^2\ra$ and
 $Z_1=\la a_1^2\ra \times \cdots \times \la a_n^2\ra$ with $Z_1\cap Z_2=1$, we have  that $x_i^2=1$ for $1\leqslant i\leqslant n$, forcing $x_i=1$ because $x_i\in \{1,a_i,b_i,a_ib_i\}$. Thus $x=x_{n+1}\in Z$ and hence $\Omega_1(G)\leqslant Z$. It follows that $Z=\Omega_1(G)$.

Let $x\in Z(G)$. If $x\not\in Z$, there exists $x_r\ne 1$ with $x_i=1$ for $1\leqslant i\leqslant r-1$. Pick $z_r\in \{a_r,b_r,a_rb_r\}$ with $z_r \ne x_r$. Then $[x_r,z_r]=a_r^2 \ne 1$. Taking $y=z_r$ in Eq.~\eqref{eq:id11}, we have $[x,z_r]=[x_r,z_r][x_{r+1},z_r]$, where $1\ne [x_r,z_r]\in Z_1$ and $[x_{r+1},z_r]\in Z_2$. Since $Z_1\cap Z_2=1$, we have $[x,z_r]\ne 1$ and hence $x \not\in Z(G)$, a contradiction. Thus $Z(G)\leqslant Z$ and since $Z\leqslant Z(G)$ also holds, we have $Z(G)=Z$.

Proposition~\ref{burnsidebasis} gives rise to
$\Phi(G)=G^\prime G^2$. By Eq.~\eqref{eq:id22},
$G^2 \leqslant Z=G^\prime$, hence $\Phi(G)=G^\prime$.
Up to now, we have proved that 
$$
\Phi(G)=G^\prime=\Omega_1(G)=Z(G)=Z.
$$

We prove next that $G$ is also indecomposable.
This is clear for $G=U(1) \cong Q_8$, hence we assume that $n > 1$.

Suppose that $G=H \times K$ holds for subgroups $H$ and $K$.
Define the sets
$$
H_i=\{ x_i : x \in H \}~\text{and}~K_i=\{ y_i : y \in K\},~\text{where}~1\leqslant i\leqslant n.
$$
We may assume without loss of generality that $H$ is chosen so that
$H_1 \ne 1$. We are going to prove below
that then $K=1$, which shows that $G$ is indeed indecomposable.
In fact, it is sufficient to show that $K \leqslant Z=Z(G)$.
Indeed, then $K \leqslant \Phi(G)$, hence $G=\la H, K \ra=H$ and
so $K=1$.

Let $x \in H$ with $x_1 \ne 1$ and let $y \in K$.
By Eq.~\eqref{eq:id11}, $[x,y]=[x_1,y_1]\, t$ for some $t \in \big\la a_{i+1}^2, c_i :  1 \leqslant i \leqslant n-1 \big\ra$.
As $[x,y]=1$, we have 
$$
t=[x_1,y_1]\in \langle a_1^2\rangle\cap \la a_{i+1}^2, c_i :  1 \leqslant i \leqslant n-1 \big\ra=1,
$$
hence $[H_1,K_1]=1$. This implies that, if $H_1=\{1,x_1\}$ then $K_1 \subset \{1,x_1\}$, contradicting that $G=H \times K$. It follows that
$H_1=\{1,a_1,b_1,a_1b_1\}$ and $K_1=1$.

Since $K_1=1$, we have $y_1=1$ and we can write $1=[x,y]=[x_1,y_2]\, t$ for some
$t \in \big\la a_{i+1}^2, c_j : 1 \leqslant i \leqslant n-1, 2 \leqslant j
\leqslant n-1 \big\ra$,
implying that $[x_1,y_2]=1$. Thus $[H_1,K_2]=1$ and then
$H_1=\{1, a_1, b_1, a_1b_1\}$ yields that $K_2=1$.

Since $G=H \times K$ and $K_2=1$, we have
$H_2=\{1,a_2,b_2,a_2b_2\}$ and then an argument similar to 
the above paragraph gives rise to $K_3=1$. Continuing in the same way, we obtain $K_1= \cdots =K_n=1$, i.e. $K\leqslant Z$. This completes the proof that $G$ is indecomposable.

Finally, it remains to show that $G$ has an automorphism of order $3$
whose fixed points comprise $Z(G)$.  Define the elements $\alpha_i$,
$\beta_i$, $\gamma_j \in G$ as
$$
\alpha_i=b_i, \; \beta_i=b_ia_i^{-1}~\text{and}~
\gamma_j=c_j~(1 \leqslant i \leqslant n,\, 1 \leqslant j \leqslant  n-1).
$$
Straightforward computations yield that $\alpha_i$, $\beta_i$ and
$\gamma_j$ generate $G$ and satisfy 
relations \eqref{rel1}--\eqref{rel4} as do $a_i$, $b_i$ and $c_i$. By \cite[Proposition~4.3 and Remark~4.4]{Jo}, there exists an automorphism $\phi \in \Aut(G)$ such that
$a_i^\phi=\alpha_i$, $b_i^\phi=\beta_i$ and $c_j^\phi=\gamma_j$. It is easy to see that $\phi$ has
order $3$ and for $x \in G$, we have $x^\phi=x$ if and only if $x \in \Omega_1(G)=Z(G)$. This completes the proof of the proposition.
\end{proof}
\section*{Acknowledgements}
The authors thank the reviewer for the numerous helpful suggestions, which improved the presentation considerably. They are also grateful to Professor Michael Giudici for mentioning the Sylow $2$-subgroups of $\SU(3,2^{2n+1})$ 
and to Professor Chris Parker for his suggestions to simplify several proofs.

\end{document}